\documentclass[12pt,reqno]{article}

\usepackage[usenames]{color}
\usepackage{amssymb}
\usepackage{graphicx}
\usepackage{amscd}

\usepackage[colorlinks=true,
linkcolor=webgreen,
filecolor=webbrown,
citecolor=webgreen]{hyperref}

\definecolor{webgreen}{rgb}{0,.5,0}
\definecolor{webbrown}{rgb}{.6,0,0}

\usepackage{color}
\usepackage{fullpage}
\usepackage{float}

\usepackage{graphics,amsmath,amssymb}
\usepackage{amsthm}
\usepackage{amsfonts}
\usepackage{latexsym}

\begin{document}

\vspace*{2.1cm}

\theoremstyle{plain}
\newtheorem{theorem}{Theorem}
\newtheorem{corollary}[theorem]{Corollary}
\newtheorem{lemma}[theorem]{Lemma}
\newtheorem{proposition}[theorem]{Proposition}
\newtheorem{obs}[theorem]{Observation}
\newtheorem{claim}[theorem]{Claim}

\theoremstyle{definition}
\newtheorem{definition}[theorem]{Definition}
\newtheorem{example}[theorem]{Example}
\newtheorem{remark}[theorem]{Remark}
\newtheorem{conjecture}[theorem]{Conjecture}

\begin{center}


{\Large\bf List and total colorings of multiset permutation graphs}
 
\vskip 1cm
\large
Italo J. Dejter

University of Puerto Rico

Rio Piedras, PR 00936-8377

\href{mailto:italo.dejter@gmail.com}{\tt italo.dejter@gmail.com}
\end{center}

\begin{abstract}
Let $k$ and $\ell$ be positive integers. The multiset star transposition graph ST$_k^\ell$ has as vertices the $k\ell$-strings $v_0\cdots v_{k\ell-1}$ on $k$ symbols, each symbol repeated $\ell$ times, and edges given by the transpositions $(v_0\;v_i)$ with $v_i\ne v_0$ ($0<i<k\ell$).
It is shown for $k>1$ and $\ell>2$ that ST$_k^\ell$ is $(\ell-1)$-choosable and that, as a result, admits total colorings. In order to prove such assertions,
the notion of efficient domination set (or E-set) of a graph is generalized for $\ell>1$ to that of an efficient dominating$\,^\ell$-set and applied to the graphs ST$_k^\ell$\,, showing they admit vertex partitions 
that generalize the Dejter-Serra partitions of ST$_k^1$ into E-sets, but not efficiently in the sense that the distance of each E$^\ell$-set be 3.
Efficiently in such sense however, $ST^2_k$ and the related 2-set pancake permutation graph PC$^2_k$, among other intermediate permutation graphs, are shown to admit total colorings with $2k-1$ colors that determine partitions into $2k-1$ E-sets, each with distance 3. 
 Furthermore, associated E-chains are examined.
 \end{abstract}

\section{Introduction}\label{s1}

Let $0<k,\ell\in\mathbb{Z}$. Given a finite graph $G=(V(G),E(G))$ of girth larger than 3 and a subset $S\subseteq V(G)$, we say that $S$ is an {\it efficient dominating$\,^\ell$-set} (or {\it E$\,^\ell$-set}) or a {\it perfect$\,^\ell\!$code}, if for each $v\in V(G)\setminus S$ there exist exactly $\ell$ vertices $v^0,v^1,\ldots,v^{\ell-1}$ in $S$ such that $v$ is adjacent to  $v^i$, for every $i\in[\ell]=\{0,\ldots,\ell-1\}$ and in addition, if $\ell>1$, then $v$ is the only vertex of $G$ in the intersection of the 1-spheres of  $v^0,v^1,\ldots,v^{\ell-1}$. Since the girth of $G$ is required to be larger than 3, the {\it dominating$\,^\ell$-set} (or {\it D$\,^\ell$-set}) $S(v)=\{v^0,v^1,\ldots,v^{\ell-1}\}$ of $v$ {\it with respect to} (or {\it wrt}) $S$ is an independent subset of $G$. 
Clearly, an E$\,^\ell$-set $S$ of $G$ is the union of the D$\,^\ell$-sets wrt $S$.
In particular, 
 if $\ell=1$ then $S$ is a {\it 1-perfect code} (or {\it 1-error correcting code}) \cite{Borges,D80,Huffman,Vera,MWS}, also said to be an {\it efficient dominating set} (or {\it E-set}) \cite{AK,BBS,gen,D73,D76,edig}. 
 
\begin{example} Here is an example of what is {\it not} an efficient dominating set:
Let $X=\{w_0,w_1\}$ and $Y=\{v,v',v''\}$ be the vertex parts of the bipartite graph $G=K_{2,3}$. Then, the stable set $S(v)=X$ is {\it not} a 2-efficient dominating set of $G$ because  the intersection of the 1-spheres of $w_0$ and $w_1$ is all of $Y$, not a sole vertex, as required in the definition above. 
\end{example}

Combined with total coloring, (whose definition is recalled shortly below),  E$\,^\ell$-sets are addressed in Sections~\ref{s2}--\ref{Galphak} and its sub-case $\ell=2$ in Sections~\ref{list}--\ref{panqueque}, in extending results of \cite{D73} and \cite{cong}. The present treatment considers the {\it multiset star transposition graphs} introduced in Section~\ref{s2} and their pancake permutation graph variation in Section~\ref{panqueque}, among others. In Sections~\ref{list}-\ref{nuevo}, these multiset cases are shown to lead to list-coloring \cite{Erdos,toft,V2} applications. Other  
applications of E$\,^\ell$-sets occur in the Theory of Error-Correcting Codes \cite{Huffman,MWS,Vera}. 

A {\it total coloring} (or {\it TC}) of a graph $G$ is an assignment of colors to the vertices and edges of $G$ such that no two incident or adjacent elements (vertices or edges)
are assigned the same color \cite{tc-as}.
The {\it TC Conjecture}, posed independently by Behzad \cite{B1,B2} and Vizing \cite{V},  asserts that the {\it total chromatic number} $\chi''(G)$ of a graph $G$ (namely, the least number of colors required by a TC of $G$) is either $\Delta(G)+1$ or $\Delta(G)+2$, where $\Delta$ is the largest degree of any vertex of $G$. A recent survey \cite{tc-as} contains an updated bibliography on TCs following the previous account of~\cite{Yap}.   
 The TC Conjecture was established for cubic graphs \cite{Feng,Mazzu,Rosen,Vi}, meaning that the total chromatic number of cubic graphs is either 4 or 5. To decide whether a cubic graph $G$ has total chromatic number $\Delta(G)+1$, even for bipartite cubic graphs, is NP-hard \cite{Arroyo}.

A total coloring of a $k$-regular graph $G$ such that the vertices adjacent to each $v\in V(G)$ together with $v$ itself are assigned pairwise different colors of the color set $[k]=\{0,1,\ldots,k-1\}$ will be said to be an {\it efficient coloring} if each $v\in V(G)$ together with its neighbors are assigned {\it all} the colors of $[k]$. 
However, this is not a property suitable for E$^\ell$-sets, as the minimum distance of each E$^\ell$-set is 2, not 3.
The study of efficient colorings was considered in~\cite{prev} and~\cite{cong} for $k$-regular graphs with $k+1$ colors.

\section{Multiset star transposition graphs}\label{s2}

Let $0<\ell\in\mathbb{Z}$ and let $1<k\in\mathbb{Z}$. A string over the alphabet $[k]$ that contains exactly $\ell$ occurrences of each $i\in[k]$ is said to be an $\ell$-{\it set permutation}.
In denoting specific $\ell$-set permutations, commas and brackets are often omitted. 

Let $V^\ell_k$ be the set of all $\ell$-set permutations of length $k\ell$. Let the {\it star $\ell$-set transposition graph} $ST^\ell_k$ be the graph on vertex set $V^\ell_k$ with an edge between each two vertices $v=v_0v_1\cdots v_{k\ell-1}$ and $w=w_0w_1\cdots w_{k\ell-1}$ that differ in a {\it star transposition}, i.e. by swapping the first entry $v_0$ of $v=v_0v_1\cdots v_{k\ell-1}\in V^\ell_k$ with any entry $v_j$, ($j\in[k\ell]\setminus\{0\}$), whose value differs from that of  $v_0$, (so $v_j\ne v_0$), thus obtaining either $$w=w_0\cdots w_j\cdots w_{k\ell-1}=v_j\cdots v_0\cdots w_{k\ell-1}\;\mbox{ or }\;w=w_0\cdots w_{k\ell-1}=v_{k\ell-1}\cdots v_0,$$
where all not mentioned entries (represented in the ellipses) remain unchanged.  In other words,
each edge of $ST^\ell_k$ is given by the transposition of the initial entry of an endvertex string with an entry that contains a different symbol than that of the initial entry. The graphs $ST_k^\ell$ are a particular case of the graphs treated in \cite{faltaba} in a context of determination of Hamilton cycles.

It is known that all $k$-permutations, (that is: all 1-set permutations of length $k$), form the {\it symmetric group}, denoted $Sym_k$, under composition of
$k$-permutations, each $k$-permutation $v_0v_1\cdots v_{k-1}$ taken as a bijection from the {\it identity} $k$-permutation $01\cdots(k-1)$ onto $v_0v_1\cdot v_{k-1}$ itself. 
A graph $ST^1_k$ with $k>1$ (which excludes $ST^1_1$) is the Cayley graph of $Sym_k$ with respect to the set of transpositions $\{(0\;i); i\in[k]\setminus\{0\}\}$.
Such a graph is denoted $ST_k$ in \cite{AK, D73}, where is proven that its vertex set admits a partition into $k$ E-sets, exemplified on the upper left of Figure~\ref{fig1}
for $ST^1_3=ST_3$, with the vertex parts of the partition differentially colored in blue, red and green, for first entries 0, 1 and 2, respectively. Figure 1 of \cite{D73} shows a similar example for $ST^1_4=ST_4$.
Also, the graph $ST^2_k$ is vertex transitive, but is not a Cayley graph; it is neither a Schreier graph with respect to the subgroup specified in Remark~\ref{Sch}; (even though every regular graph can be presented as a Schreier graph).

Let $k>1$. 
We consider in Section~\ref{Galphak},
for each $i\in[k]$, among the vertices $v=v_0\dots v_{k\ell -1}$ of $ST^\ell_k$, those that have the first entry $v_0$ equal to $i$ and find they form an E$\,^\ell$-set $S=S_i^k$ of $ST^\ell_k$, to be said to be an {\it SE$\,^\ell$-set}. In addition, we consider in Sections~\ref{s3}-\ref{panqueque} the sets $\Sigma_i^k$ of vertices $v_0v_1\cdots v_{2k-1}$ of $ST^2_k$ such that $v_0=v_i$ for exactly one value $i=1,\ldots,2k-1$, as there are just two occurrences of each symbol in every vertex of $ST^\ell_k$. 
In preparation, Sections~\ref{list}-\ref{nuevo} shows, for $k>1$, that
\begin{enumerate}\item if $\ell>1$, then $ST_k^\ell$ has a total coloring;
\item if $\ell>2$, then $ST_k^\ell$ is $(\ell-1)$-choosable; 
\item $ST_k^2$ has an efficient coloring via the sets $\Sigma_i^k$, to be referred to as {\it $\Sigma$E-sets}.
\end{enumerate} 
 Theorem~\ref{chi} (Section~\ref{prelim}) is a structural result for efficient colorings of $ST_k^2$ as in item 3.

Let $\ell\in\{0,1\}$. In Subsection~\ref{Cs} we generalize, by means of $\ell$-set permutations (see Section~\ref{s2}), the result of \cite{D73} that the star transposition graphs form a {\it dense segmental neighborly E-chain}. In Section~\ref{panqueque} we generalize multiset star transposition graphs to multiset pancake permutation graphs and related intermediate graphs \cite{D73} leading to a suitable version of dense neighborly E-chain with obstructions preventing segmental E-chains. 

\section{SE$\,^\ell$-sets of $\ell$-set star transposition graphs}
\label{Galphak}

The vertices of $ST^\ell_k$ are the multiset permutations $v_0\dots v_{k\ell -1}$ of the string $$\overbrace{0\cdots 0}\overbrace{1\cdots 1}\overbrace{2\cdots 2}\cdots\overbrace{(k-1)\cdots(k-1)}=0^\ell  1^\ell 2^\ell \cdots(k-1)^\ell .$$         
For each $i\in[k]$, among the vertices $v=v_0\dots v_{k\ell -1}$ of $ST^\ell_k$, let us show that those that have the first entry $v_0$ equal to $i$ constitute an SE$\,^\ell$-set $S=S_i^k$ of $ST^\ell_k$.   

\begin{theorem}\label{teo1}\label{t} Let $0<\ell ,k\in\mathbb{Z}$ and let $ST^\ell_k\ne ST^1_2$.
For each $i\in[k]$, the multiset permutations $v_0\dots v_{k\ell -1}$ of the string 
$0^\ell  1^\ell 2^\ell \cdots(k-1)^\ell$ that have their first entry $v_0$ equal to $i$ constitute 
an SE$\,^\ell$-set $S_i^k$ of the graph $ST^\ell_k$. Each D$\,^\ell$-set wrt $S_i^k$ is an independent set of $ST^\ell_k$. 
\end{theorem}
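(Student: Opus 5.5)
Fix $i\in[k]$ and let $S=S_i^k$ be the set of vertices with first entry $i$. I will check the definition of E$^\ell$-set directly. This needs three things: that $ST^\ell_k$ has girth larger than $3$, that each $v\notin S$ has exactly $\ell$ neighbours in $S$, and (for $\ell>1$) that $v$ is the only common neighbour of those $\ell$ vertices.

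\emph{Girth.} The first entry changes along every edge. Take a triangle $u,v,w$. Write $u=a\cdots$; then $v$ is obtained by swapping $a$ with some entry $b\ne a$ at position $j$, so $v$ starts with $b$. Hence $w$ must start with a symbol $c\notin\{a,b\}$. But $w$ is a single star transposition away from $u$, so $w$ agrees with $u$ outside positions $0$ and some position $m$. It is also a single transposition away from $v$, so it agrees with $v$ outside positions $0$ and some position $m'$. Meanwhile $u$ and $v$ differ exactly at positions $0$ and $j$. Counting the symbol multiset in position $0$ together with the changed positions gives a contradiction. Concretely, $w_0=c$ forces $c=u_m=v_{m'}$. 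If $m\ne j$ then $v_m=u_m=c$, and the multiset of $w$ is $u$ with $a,c$ swapped, which would have to differ from $v$ in more than two places. The case $m=j$ gives $c=u_j=b$, which is excluded. So there are no triangles. For $\ell=1$ and $k=2$ the graph $ST^1_2=K_2$ is excluded anyway; in general a small case check confirms that the girth exceeds $3$.

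\emph{Exactly $\ell$ neighbours in $S$.} Let $v\notin S$, so $v_0\ne i$. The neighbours of $v$ in $S$ are exactly the strings obtained by swapping $v_0$ with a position $j>0$ having $v_j=i$. Since $v_0\ne i$, all $\ell$ occurrences of $i$ lie in positions $j\ge1$. This gives exactly $\ell$ neighbours $v^0,\dots,v^{\ell-1}$, and they are distinct because the positions differ.

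\emph{Uniqueness of the common neighbour.} Each $v^t$ equals $v$ except that $v^t_0=i$ and $v^t_{j_t}=v_0$. Suppose $\ell>1$ and $w$ is a common neighbour of all the $v^t$. Since $w\notin S$ (no edges join vertices with the same first entry), $w_0\ne i$. Moreover $w$ arises from $v^0$ by swapping position $0$ with some position $p$ where $v^0_p=w_0$, and the new string has $i$ at $p$. Because $w$ is also adjacent to $v^1$, it must agree with $v^1$ off two positions. Now $v^0$ and $v^1$ differ in the four positions $0,j_0,j_1$ (with $v^0_{j_1}=i$ and $v^1_{j_0}=i$). A string one transposition from each must fix position $p=j_0$, since otherwise $w$ differs from $v^1$ at $j_0$, at $j_1$ and at $p$. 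This forces $w_0=v_0$, and so $w=v$.

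Independence of each D$^\ell$-set then follows because all the $v^t$ start with $i$, and adjacent vertices have different first entries. I expect the main obstacle to be the careful position bookkeeping in the uniqueness and girth steps, with the degenerate case $\ell=1$, $k=2$ excluded.
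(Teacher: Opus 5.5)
Your proof is correct. Its core is the same as the paper's: the neighbours of $v\notin S_i^k$ inside $S_i^k$ are obtained by swapping $v_0$ with each of the $\ell$ positions that carry the symbol $i$.

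The difference is completeness. The paper's proof stops after this count. It asserts without proof that $ST^\ell_k$ has girth larger than $3$, and uses that to deduce independence of each D$^\ell$-set. It never checks the second clause of the E$^\ell$-set definition, namely that for $\ell>1$ the vertex $v$ is the only common neighbour of $v^0,\dots,v^{\ell-1}$. Your proposal supplies each of these points:
\begin{itemize}
\item You prove triangle-freeness directly.
\item You verify the unique-common-neighbour clause, in the stronger form that any two of the $v^t$ already have $v$ as their only common neighbour.
\item You get independence from the more elementary fact that every edge changes the first entry. So $S_i^k$ is itself independent, and the girth bound is not needed for that step.
\end{itemize}

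There are two small bookkeeping slips to fix. First, $v^0$ and $v^1$ both start with $i$, so they agree at position $0$ and differ only at $j_0$ and $j_1$, not at ``four positions $0,j_0,j_1$''. Second, you should say explicitly that $p\neq j_1$: since $v^0_{j_1}=i=v^0_0$, swapping positions $0$ and $j_1$ is not an edge. Neither slip affects the conclusion. For $p\notin\{j_0,j_1\}$, the string $w$ differs from $v^1$ at the four positions $0,j_0,j_1,p$, whereas an edge changes exactly two.

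The ``small case check'' at the end of your girth step is unnecessary, because your triangle argument is uniform in $k$ and $\ell$. Also, in that step, ``the multiset of $w$'' should read ``the string $w$''.
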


\begin{proof}
For fixed $i\in[k]$, each vertex $v=v_0v_1\cdots,v_{k\ell -1}$ of $E(ST^\ell_k)\setminus S_i^k$ has initial entry $v_0=j$, for some $j\in[k]$ such that $j\ne i$. Then, $v$ is adjacent to $\ell$ vertices of $S_i^k$ obtained by transposing the position of each of the $\ell$ entries $v_h=i$ with the position of that initial entry $v_0=j$, where $h\in\{1,\ldots,k\ell -1\}$. The graph induced by the edges of such adjacencies form a copy $H$ of the complete bipartite graph $K_{1,\ell }$ with $v$ as its sole degree-$\ell$ vertex and its leaves (if $H$ is taken as a rooted tree with $v$ as its root) as the vertices $v^j$ in $S_i^k$
obtained from $v$ by transposing $v_0=j$ with the entries $v_h^j=i$ of those vertices. Since $ST^\ell_k\ne ST^1_2$ has girth larger than 3, then each D$\,^\ell$-set wrt $S_i^k$ is an independent set of $ST^\ell_k$. 
\end{proof}

\begin{corollary}
 The vertex set $V(ST^\ell_k)$ admits a partition into $k$ SE$\,^\ell$-sets $S_i^k$, where $i\in[k]$. 
\end{corollary}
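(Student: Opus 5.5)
The corollary reduces to showing that the sets $S_i^k$, $i\in[k]$, are pairwise disjoint, cover $V(ST^\ell_k)$, and are each SE$\,^\ell$-sets; Theorem~\ref{teo1} already supplies the last property. So the plan is to work directly from the definition of $S_i^k$ as the set of vertices whose first entry equals $i$, and to handle the excluded case separately.

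First, covering. Every vertex $v=v_0v_1\cdots v_{k\ell-1}\in V^\ell_k$ has a well-defined first entry $v_0$. Since $v$ is a string over the alphabet $[k]$, we have $v_0\in[k]$, so $v\in S_{v_0}^k$. Thus $\bigcup_{i\in[k]}S_i^k=V(ST^\ell_k)$.

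Second, disjointness. If $v\in S_i^k\cap S_{i'}^k$, then $i=v_0=i'$, so distinct indices give disjoint sets. Each $S_i^k$ is also nonempty: every symbol $i$ occurs $\ell\ge1$ times in $0^\ell1^\ell\cdots(k-1)^\ell$, so some arrangement of that string begins with $i$. Hence $\{S_i^k:i\in[k]\}$ is a genuine partition into exactly $k$ parts.

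Third, the SE$\,^\ell$ property of each part follows from Theorem~\ref{teo1}, provided $ST^\ell_k\ne ST^1_2$. The only delicate point, and the main obstacle, is this excluded case, which the corollary does not explicitly rule out. I would check it directly. $ST^1_2$ is the single edge between $01$ and $10$, with $S_0^2=\{01\}$ and $S_1^2=\{10\}$. For $\ell=1$, the defining condition of an E$\,^1$-set is just efficient domination: each vertex outside $S$ has exactly one neighbor in $S$, and the extra intersection clause applies only when $\ell>1$. This condition holds trivially here. The girth hypothesis is vacuous for the acyclic graph $K_2$, so the statement holds in this case as well, or one simply reads the corollary under the same hypothesis as Theorem~\ref{teo1}.
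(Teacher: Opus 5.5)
Your proposal is correct and follows essentially the paper's argument: the sets $S_i^k$ partition $V(ST^\ell_k)$ because each vertex has a unique first entry $v_0\in[k]$, and each part is an SE$\,^\ell$-set by Theorem~\ref{teo1}. Your additional checks, that each part is nonempty and that the excluded graph $ST^1_2\cong K_2$ also works, are correct refinements that the paper leaves implicit.
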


\begin{proof} The SE$\,^\ell$-sets $S_i^k$ form a partition of $V(ST^\ell_k)$, since each such $S_i^k$ is composed precisely by the vertices $v=v_0v_1\cdots v_{k\ell -1}$ of $ST^\ell_k$ having initial entry $v_0=i$, which form precisely one of the $k$ parts of the partition.
\end{proof}

Let $2ST^\ell_k$ be the multigraph obtained from $ST^\ell_k$ by replacing each edge $e$ of $ST^\ell_k$ by two parallel edges with the same endvertices of $e$. 

\begin{corollary}
Let $i\in[k]$. Each vertex of $S_i^k$ belongs to $k\ell -1$ D$\,^\ell$-sets wrt $S_i^k$, where the induced graph of each D$\,^\ell$-set is isomorphic to the complete bipartite graph $K_{1,\ell }$. 
The set of all such D$\,^\ell$-sets, for every $i\in[k]$, form a partition of the edge set of $2ST^\ell_k$.
\end{corollary}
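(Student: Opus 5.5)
The plan is to read everything off the proof of Theorem~\ref{teo1}. That proof attaches to each vertex $v\notin S_i^k$ a star $H\cong K_{1,\ell}$ with root $v$ and leaves the members of $S(v)$. I will count the D$\,^\ell$-sets through a fixed $u\in S_i^k$ by indexing them by the positions $h\in\{1,\ldots,k\ell-1\}$ of $u$, as the paper does. I will then check the edge-partition claim by a double count in which every edge of $ST^\ell_k$ is seen once from each endpoint.

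\emph{Step 1: D$\,^\ell$-sets through $u$, indexed by positions.} Fix $u=u_0u_1\cdots u_{k\ell-1}$ with $u_0=i$. For each position $h\in\{1,\ldots,k\ell-1\}$, apply the star transposition $(u_0\;u_h)$ when $u_h=j\ne i$. This yields a vertex $v$ with $v_0=j$, so $v\notin S_i^k$, and $v_h=i$. By the proof of Theorem~\ref{teo1}, $u$ is one of the $\ell$ leaves of the star $H$ rooted at $v$; that is, $u\in S(v)$.
\begin{itemize}
\item Distinct such positions $h$ give distinct roots $v$, since $v$ differs from $u$ exactly at positions $0$ and $h$.
\item Conversely, every $v\notin S_i^k$ with $u\in S(v)$ arises this way, because $u$ is obtained from $v$ by swapping $v_0$ with some $v_h=i$.
\end{itemize}
So the D$\,^\ell$-sets wrt $S_i^k$ containing $u$ are in bijection with the positions $h$ carrying a symbol different from $i$.

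\emph{Step 2: reconciling with the count $k\ell-1$ (the main obstacle).} The positions $1,\ldots,k\ell-1$ number $k\ell-1$. Among them, the $\ell-1$ positions $h$ with $u_h=i$ give no transposition and hence no root. So the genuine stars through $u$ number exactly $\deg u=(k-1)\ell$. This equals $k\ell-1$ precisely when $\ell=1$, the case of \cite{D73}.

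For $\ell>1$, the statement's count is recovered only by indexing over all $k\ell-1$ positions, with the $\ell-1$ positions carrying $i$ counted as degenerate (empty) stars. I will make this indexing explicit. I will also record that the non-degenerate stars are exactly $(k-1)\ell$, since only that count is used afterwards.

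\emph{Step 3: each star is $K_{1,\ell}$, and the edge partition of $2ST^\ell_k$.} The induced star is $K_{1,\ell}$ with independent leaves, by Theorem~\ref{teo1} together with the girth condition. For the partition, take an edge $uv$ of $ST^\ell_k$ with $u_0=i$ and $v_0=j$; then $j\ne i$, since a star transposition swaps distinct symbols.
\begin{itemize}
\item The edge $uv$ lies in the star of $v$ wrt $S_i^k$, because $u\in S(v)$.
\item It also lies in the star of $u$ wrt $S_j^k$, by symmetry.
\item It lies in no other star. The star of any root $w$ wrt $S_m^k$ consists of edges from $w$ to vertices with first entry $m$. Hence an edge $uv$ in such a star forces either $(w,m)=(v,i)$ or $(w,m)=(u,j)$.
\end{itemize}
Thus, over all $i\in[k]$, the stars cover each edge exactly twice. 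Assigning the two parallel copies of each edge of $2ST^\ell_k$ to these two stars gives the required partition of $E(2ST^\ell_k)$.
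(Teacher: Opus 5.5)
Your Steps~1 and~3 are sound and are a careful version of the paper's argument. You read the star $K_{1,\ell}$ on $\{v\}\cup S(v)$ off the proof of Theorem~\ref{teo1}, then double-count each edge via the first entries of its two endpoints. Step~3 is in fact more complete than the paper, which only notes that an edge with end-labels $i\ne j$ lies in a star wrt $S_i^k$ and one wrt $S_j^k$, without excluding a third.

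The problem is Step~2. Your count there is right. The D$^\ell$-sets wrt $S_i^k$ through $u$ correspond to the $(k-1)\ell=\deg u$ positions $h\ge 1$ with $u_h\ne i$, and $(k-1)\ell=k\ell-1$ only when $\ell=1$. So for every $\ell>1$ the statement as written is false, and there is nothing to reconcile.

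Your padding by $\ell-1$ ``degenerate (empty) stars'' does not prove it. A position with $u_h=i$ yields no root $v\notin S_i^k$, because swapping two equal symbols is not an edge of $ST^\ell_k$. So no D$^\ell$-set is attached to such a position, and $u$ does not belong to an empty set anyway.

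The paper's proof reaches $k\ell-1$ only via the non sequitur ``since there are $k-1$ values $j\ne i$''; counting the $\ell$ occurrences of each such $j$ gives $(k-1)\ell$. The paper's own examples contradict the stated number:
\begin{itemize}
\item In $ST_2^2$ the vertex $0011$ lies in exactly two D$^2$-sets wrt $\{0011,0101,0110\}$, not three.
\item Display~(\ref{ccc}) lists four D$^2$-sets through $010122$ in $ST_3^2$, i.e.\ $(k-1)\ell=4$, whereas $k\ell-1=5$.
\end{itemize}

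Drop the padding and prove the corrected claim: each vertex of $S_i^k$ lies in exactly $(k-1)\ell$ D$^\ell$-sets wrt $S_i^k$. With that change your argument is complete. The edge-partition part of the statement is unaffected, since it only uses the genuine stars.
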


\begin{figure}[htp]
\includegraphics[scale=1.1]{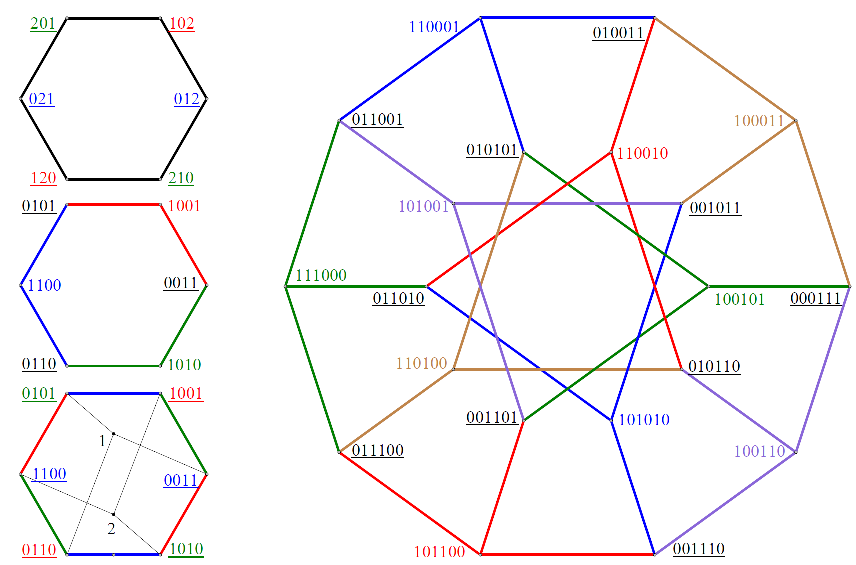}
\caption{Colorings of $ST_3^1=ST_3$, $ST_2^2$ (twice) and the Desargues graph $ST_2^3$.}
\label{fig1}
\end{figure}

\begin{proof}
Since there are $k-1$ values $j\ne i$ in $[k]$, each vertex $v\in S_i^k$ belongs to $k\ell -1$ D$\,^\ell$-sets wrt $S_i^k$. Since each such $v$ is the neighbor of $\ell$ vertices with first entry $v_0=j$, for each $j\in[k]$ with $j\ne i$, then the induced graph of each D$\,^\ell$-set is isomorphic to $K_{1,\ell }$\,,  noted already in the proof of Theorem~\ref{t}. Also, each edge $e$ of $ST^\ell_k$ with endvertices $v$ and $w$ having respective first entries $i$ and $j$ is both a member of $S_i^k$ and of $S_j^k$. Thus, $e$ appears in $2ST^\ell_k$ as two parallel edges $e^i$ and $e^j$ in the respective induced subgraphs of $S_i^k$ and $S_j^k$.
\end{proof}

\begin{corollary}
If $k=2$, each $S_i^k=S_i^2$ as in Theorem~\ref{teo1} induces an edge partition $P(S_i^2)$ of $ST^\ell_2$ whose monochromatic components are copies of $K_{1,\ell }$
in $ST_2^\ell$. 
\end{corollary}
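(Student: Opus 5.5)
When $k=2$, every edge of $ST^\ell_2$ joins a vertex with initial entry $0$ to a vertex with initial entry $1$. Indeed, a star transposition swaps $v_0$ with some $v_j\ne v_0$, and with only two symbols this forces $v_j$ to be the other symbol. So $ST^\ell_2$ is bipartite with parts $S_0^2$ and $S_1^2$, and for a fixed $i\in[2]$ every edge has exactly one endvertex outside $S_i^2$, namely in $S_{1-i}^2$. The plan is to define $P(S_i^2)$ by grouping the edges according to that endvertex. For each $v\in S_{1-i}^2$, let $H_v$ be the set of edges joining $v$ to $S_i^2$, and put $P(S_i^2)=\{H_v: v\in S_{1-i}^2\}$.

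First I will check that $P(S_i^2)$ is a partition of $E(ST^\ell_2)$. Each edge has exactly one endvertex $v\in S_{1-i}^2$, so it lies in $H_v$ and in no other class. Next, by the proof of Theorem~\ref{t}, each $v\in S_{1-i}^2$ is adjacent to exactly $\ell$ vertices of $S_i^2$. These are obtained by transposing $v_0=1-i$ with each of the $\ell$ entries $v_h=i$, and they are distinct because the positions $h$ are distinct. Since $v$ has degree $k\ell-\ell=\ell$ in $ST^\ell_2$, the set $H_v$ is in fact all edges at $v$, and it spans a star $K_{1,\ell}$ centered at $v$. Its leaves form the D$^\ell$-set of $v$ wrt $S_i^2$, which is independent by Theorem~\ref{t}; for $k=2$ this is also immediate from bipartiteness.

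Finally I must argue that, if each class $H_v$ gets its own color, the monochromatic components are exactly these copies of $K_{1,\ell}$. Each $H_v$ is connected (a star), and different classes receive different colors, so each monochromatic component is precisely one $H_v$. I will also note that the $H_v$ are edge-disjoint, but two stars can share a leaf in $S_i^2$. This is consistent with the previous corollary: each vertex of $S_i^2$ lies in $k\ell-\ell=\ell$ such D$^\ell$-sets.

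The only delicate point is interpretive: what ``edge partition whose monochromatic components'' means. Stars sharing a leaf must have distinct colors, or they would merge into a larger monochromatic component. I will handle this by assigning distinct colors to the classes $H_v$. Alternatively, one can use a proper coloring of the conflict graph in which $H_v$ and $H_{v'}$ are adjacent when they share a leaf. The excluded case $ST^1_2$ (a single edge, $K_{1,1}$) is harmless here and poses no difficulty.
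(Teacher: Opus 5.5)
Your proof is correct and takes the same approach as the paper. The paper simply asserts that the stars $K_{1,\ell}$ induced by the D$^\ell$-sets $S_i^2(v)$, $v\in S_{1-i}^2$, partition $E(ST^\ell_2)$; your observations that $ST^\ell_2$ is bipartite with parts $S_0^2,S_1^2$ and that every vertex has degree $\ell$ supply the justification the paper leaves implicit. One small aside: your count of $(k-1)\ell=\ell$ D$^\ell$-sets through each vertex of $S_i^2$ is correct, but it does not literally agree with the preceding corollary, which states $k\ell-1$; that value looks like a slip for $(k-1)\ell$, as the paper's own $ST_3^2$ example, which lists $4$ such sets, indicates.
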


\begin{proof}
 For each fixed $i=0,1$, the copies of $K_{1,\ell }$ induced by the D$\,^\ell$-sets $S_i^2(v)$ wrt $S_i^2$ form a partition of the edge set of $ST^\ell_2$ whose monochromatic components are precisely such copies.  
\end{proof}

\begin{example} Here are two examples of SE$\,^\ell$-sets, for $\ell=2,3$, where elements of SE$\,^\ell$-sets appear underlined in the accompanying Figure~\ref{fig1}.

The graph $ST_2^2$ is the 6-cycle graph $(0011,1001,0101,1100,0110,1010)$, represented in the middle left of Figure~\ref{fig1} and showing in distinct edge shades the induced subgraphs of the composing D$^2$-sets of the SE$\,^2$-set $S=\{0011,0101,0110\}$, namely the D$^2$-set $\{0011,0101\}$ of 1001 wrt $S$, the D$^2$-set $\{0011,0110\}$ of 1010 wrt $S$, and the D$^2$-set $\{0110,0101\}$ of 1100 wrt $S$. 

 The graph $ST_2^3$ is the Desargues graph on the right of Figure~\ref{fig1}, where the subgraphs $K_{1,3}$ induced by the D$^3$-sets $S_0^2(v)$ of the vertices $v=1v_1\cdots v_5$ of $ST_2^3$ wrt $S_0^2$ have the edges in red, blue, green, hazel and violet, in order to differentiate them. These subgraphs $K_{1,3}$ appear in ``opposing'' pairs, allowing to raise the question of how many colors are necessary to color such edge partitions.
\end{example}

\begin{example}
The vertices of $ST_3^2$ are the multiset permutations of $v_0^0=001122$, a total of $|V(ST_3^2)|=\frac{6!}{2!2!2!}=\frac{720}{8}=90$ vertices. The regular degree of $ST_3^2$ is 4.
The graph $ST^3_2$ has the SE$^3$-set $S_0^2=\{v_0\cdots v_5\in V(ST(_3^2);v_0=0\}$. 
For example, $v=100122$ has $S^k_0(v)=S^2_0(v)=\{010122,001122\}$ as its D$\,^\ell$-set wrt $S$, 
and $v'=120120$ has $S^k_0(v')=S^2_0(v')=\{021120,020121\}$ as D$\,^\ell$-set wrt $S_0^2$. Each vertex $v$ in $S_0^2$ belongs to $k\ell-1=4$ D$^2$-sets wrt $S_0^2$. 
While $ST_3^2$ has 90 vertices, $S$ has $\frac{90}{k}=\frac{90}{3}=30$ vertices.
For example, 010122 belongs to $S^2_0(100122)$, $S^2_0(110022)$, $S^2_0(210102)$ and $S^3_0(210120)$. Specifically as in display (\ref{ccc}).
\begin{eqnarray}\label{ccc}\begin{array}{ccc}
S_0^3(100122)=\{010122,001122\},\\
S_0^3(110022)=\{010122,011022\},\\
S_0^3(210102)=\{010122,012102\},\\
S_0^3(210120)=\{010122,012120\}.\end{array}\end{eqnarray}
\end{example}

\section{List coloring of $\ell$-set star transposition graphs}
\label{list}

\begin{theorem}\label{pec} Let $k>0$ and let $\ell>1$. Then, the graph
$ST_k^\ell$ has a proper edge coloring.
\end{theorem}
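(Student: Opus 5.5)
The plan is to build the coloring from bipartite pieces rather than coloring $ST_k^\ell$ directly. Existence of some proper edge coloring is trivial, since Vizing's theorem gives at most $\Delta+1=(k-1)\ell+1$ colors. What I would aim for is a structured coloring tied to the SE$\,^\ell$-sets $S_i^k$ of Theorem~\ref{t}, using close to $\Delta=(k-1)\ell$ colors.

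\textbf{Step 1: split the edge set by first entries.} Every edge $vw$ of $ST_k^\ell$ swaps position $0$ with a position $j$ where $v_j\neq v_0$. Hence $w_0=v_j$, and the endpoints lie in two distinct classes $S_a^k$ and $S_b^k$ with $a=v_0\neq b=w_0$. For each unordered pair $\{a,b\}\subset[k]$, let $H_{ab}$ be the spanning subgraph formed by the edges between $S_a^k$ and $S_b^k$. These subgraphs partition $E(ST_k^\ell)$. By the proof of Theorem~\ref{t}, each vertex $v\in S_a^k$ has exactly $\ell$ neighbors in $S_b^k$, namely the D$\,^\ell$-set of $v$ with respect to $S_b^k$. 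So $H_{ab}$ is bipartite with parts $S_a^k$ and $S_b^k$, and it is $\ell$-regular on $S_a^k\cup S_b^k$.

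\textbf{Step 2: color each piece.} By K\H{o}nig's edge-coloring theorem, each $H_{ab}$ has a proper edge coloring with exactly $\ell$ colors. Equivalently, it decomposes into $\ell$ perfect matchings of $S_a^k\cup S_b^k$.

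\textbf{Step 3: assign palettes.} A vertex in $S_a^k$ meets only the pieces $H_{ab}$ with $b\neq a$. It therefore suffices to give each pair $\{a,b\}$ a palette of $\ell$ colors so that pairs sharing a symbol get disjoint palettes. This amounts to a proper edge coloring of the complete graph $K_k$. Using $\chi'(K_k)$ colors there and replacing each color by a block of $\ell$ fresh colors, we obtain a proper edge coloring of $ST_k^\ell$ with $\ell\,\chi'(K_k)$ colors. When $k$ is even, $\chi'(K_k)=k-1$, so this uses exactly $(k-1)\ell=\Delta$ colors and $ST_k^\ell$ is class~1.

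\textbf{Main obstacle: odd $k$.} Here the construction gives $k\ell$ colors, which exceeds $\Delta$. I would first try to merge the palettes of the $k$ near-perfect matchings of $K_k$. In a $k$-edge-coloring of $K_k$ with $k$ odd, each symbol $a$ misses exactly one color class. So the vertices of $S_a^k$ leave one block of $\ell$ colors unused, and I would try to recolor one matching of some $H_{ab}$ into a missing block via Kempe-chain swaps. If this fails in general, the result still stands with $\ell k$ colors, or with $\Delta+1$ colors by Vizing, either of which suffices for the statement as written.
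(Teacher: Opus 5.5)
Your proof is correct, but it takes a different route from the paper's.

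\textbf{The paper's route.} The paper colors by positions. The edge joining $v$ to the string obtained by swapping positions $0$ and $i$ receives color $i$. This is well defined because the swap is an involution, so both endpoints see the same $i$. The edges at $v$ are indexed by distinct positions $i$ with $v_i\neq v_0$, so the coloring is proper. It uses the $k\ell-1=\Delta+\ell-1$ labels $1,\dots,k\ell-1$, is given by a closed formula, and needs no external theorem.

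\textbf{Your route.} You decompose the edge set into the $\ell$-regular bipartite pieces $H_{ab}$ between the classes $S_a^k$ and $S_b^k$. You color each piece with K\H{o}nig's theorem and then blow up a proper edge coloring of $K_k$ into blocks of $\ell$ colors. This gives an optimal count when $k$ is even: exactly $\Delta=(k-1)\ell$ colors, so $ST_k^\ell$ is class~1. The cost is that the coloring is no longer explicit. For odd $k$, your $k\ell$ colors are one more than the paper's $k\ell-1$, and Vizing's $\Delta+1$ beats both when $\ell>2$.

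\textbf{Why the positional labels matter.} In the paper's subsequent total coloring, a vertex $v$ is colored by a position $j\neq 0$ with $v_j=v_0$. Such a $j$ automatically differs from every edge color at $v$, and from every edge color at $v$'s neighbours, because those colors are positions holding symbols different from the first entry. Your block colors carry no positional meaning, so they would not plug into that argument without modification.

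Finally, your odd-$k$ Kempe-chain paragraph is not needed. The statement only asserts existence of a proper edge coloring, which holds trivially for any graph.
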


\begin{figure}[htp]
\includegraphics[scale=1.44]{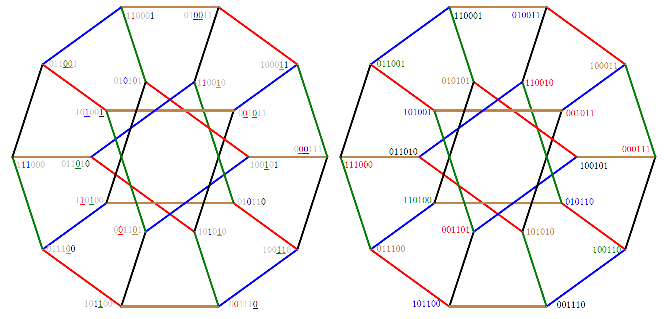}
\caption{On 2-choosability of the Desargues graph $ST_2^3$.}
\label{fig2}
\end{figure}

\begin{proof}
A proper edge coloring of $ST_k^\ell$ is given as follows. Let $i\in[k\ell]\setminus\{0\}$. Then, 
each edge 
\begin{eqnarray}\label{notice}
(v,w)=(v_0v_1\cdots v_i\cdots v_{k\ell-1},\;v_iv_1\cdots v_0\cdots v_{k\ell-1}),
\mbox{ where }v_i\ne v_0,
\mbox{ is assigned color }i.
\end{eqnarray} 
The resulting edge color assignment also is to include that the edge
\begin{eqnarray}\label{not2}\begin{array}{ll}
(v,w)=(v_0v_1\cdots v_{k\ell-1},\;v_1v_0\cdots v_{k\ell-1}),\mbox{ where }v_1\ne v_0,&\mbox{ is assigned color }1,\mbox{ and}\\
(v,w)=(v_0v_1\cdots v_{k\ell-1},\;v_{k\ell-1}v_1\cdots v_{0}),\mbox{ where }v_{2k-1}\ne v_0,&\mbox{ is assigned color }k\ell-1.
\end{array}\end{eqnarray} 
As a result, the edges incident to each vertex of $ST_k^\ell$ have pairwise different colors, so that $ST_k^\ell$ has a proper edge coloring.
\end{proof}

Given a graph $G$ and a color subset (said to be a {\it list}) $L(v)$, $\forall v\in V(G)$, a {\it list coloring} of $G$ is a function $\lambda$ that maps every $v\in V(G)$ to a color $\lambda(v)\in L(v)$ \cite{Erdos,toft,Thomassen,V2}. Such list coloring is {\it proper} if no two adjacent vertices get the same color. A graph is $k$-{\it choosable} if it has a proper list coloring, no matter how one assigns a list of $k$ colors to each vertex.

\begin{theorem}\label{choo}
Let $k>1$ and let $\ell>2$. Then, the graph $ST_k^\ell$ is $(\ell-1)$-choosable. However, no $(\ell-1)$-list coloring of $ST_k^\ell$ contains only efficient color classes.
\end{theorem}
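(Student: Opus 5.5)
The proof splits into the two assertions of Theorem~\ref{choo}. I would attack the choosability claim through the partition of $V(ST_k^\ell)$ into the $k$ SE$^\ell$-sets $S_i^k$ given by Theorem~\ref{teo1} and its first corollary.

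\textbf{Choosability.} By Theorem~\ref{teo1}, each $S_i^k$ is independent in the relevant sense, and each vertex outside $S_i^k$ has exactly $\ell$ neighbors in $S_i^k$, forming a $K_{1,\ell}$. The plan is to colour the classes one at a time, $S_0^k, S_1^k, \ldots, S_{k-1}^k$, choosing each colour from the vertex's own list $L(v)$ with $|L(v)|=\ell-1$.

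\begin{enumerate}
\item Order the classes so that a vertex $v\in S_j^k$ has already coloured neighbours only in $S_i^k$ with $i<j$. There are $\ell$ such neighbours per earlier class.
\item When colouring $S_i^k$, keep an extra invariant: every not-yet-coloured vertex must still have at least one admissible colour left in its list. To keep this invariant, use the second corollary of Section~\ref{Galphak}. It says the D$^\ell$-sets of $S_i^k$ are stars $K_{1,\ell}$, and together they partition the edges of $2ST_k^\ell$. For each outside vertex $v$, its $\ell$ neighbours in $S_i^k$ lie in the single star centred at $v$. So we would require that these $\ell$ neighbours use at most $\ell-2$ distinct colours on $L(v)$. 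This leaves at least one colour of $L(v)$ free for later.
\item Realise the condition in step~2 through a system of distinct representatives or a kernel argument. Since $\ell>2$, each star has at least $3$ leaves, so pigeonhole forces some repetition among $\ell$ leaves coloured from lists of size $\ell-1$. The work is to make this repetition occur \emph{on the colours of $L(v)$}, simultaneously for all centres $v$.
\end{enumerate}

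\textbf{Main obstacle.} I expect the invariant in step~2 to be the genuinely hard step. A naive Alon--Tarsi orientation argument cannot work. The graph has degree $(k-1)\ell$, so any orientation has average outdegree $(k-1)\ell/2$, which exceeds $\ell-2$. The argument therefore has to rely on the rigid star structure of the D$^\ell$-sets rather than on degree counting. It is also worth checking small cases first, such as the Desargues graph $ST_2^3$ (Figure~\ref{fig2}). Bipartite graphs of large degree tend to have large list chromatic number, so if the plan breaks anywhere it will break here.

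\textbf{No efficient colour classes.} Here I would use a counting argument. Suppose a colour class $C$ is an E$^\ell$-set. Double-count the edges between $C$ and its complement: each vertex of $C$ sends $(k-1)\ell$ edges out, and each outside vertex receives exactly $\ell$. This gives
\[
|C|(k-1)\ell=\ell\,(|V|-|C|), \qquad\text{so}\qquad |C|=\frac{|V|}{k}.
\]
If all $\ell-1$ classes were efficient, summing their sizes gives $(\ell-1)|V|/k=|V|$, so $k=\ell-1$. In that case I would take a list assignment in which some vertex's list misses one of the colours. Then at most $\ell-2$ classes can be efficient, and the counting identity is contradicted.
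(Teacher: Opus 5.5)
\textbf{Choosability.} The gap you flag in steps 2--3 is genuine, and it cannot be closed, because the first assertion is false as stated.

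For $(k,\ell)=(2,3)$, $ST_2^3$ is the Desargues graph. It is connected and cubic, hence its own core. By the Erd\H{o}s--Rubin--Taylor characterization, a connected graph is 2-choosable only if its core is $K_1$, an even cycle or some $\theta_{2,2,2m}$, so $ST_2^3$ is not 2-choosable.

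For $k\ge 3$ and $\ell\ge 2$ the graph is not even bipartite. The sequence $001122,100122,010122,210102,120102,021102,201102$ is a 7-cycle of $ST_3^2$ (swap position $0$ successively with positions $2,1,4,1,2,1,4$). Appending a fixed suffix embeds it in $ST_k^\ell$, so $ST_k^3$ is not 2-colourable. For fixed $\ell$ and large $k$, Alon's bound $\mathrm{ch}(G)\ge(\frac12-o(1))\log_2 d$ with $d=(k-1)\ell$ rules the claim out as well.

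The paper's own argument does not prove choosability either. It fixes one particular list assignment (the positions of the other copies of $v_0$), which is exactly what the definition of choosability forbids.

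Inside your plan there are two further problems. The pigeonhole in step 3 is unfounded: the leaves of a star have different lists and may receive $\ell$ distinct colours. Step 2 controls each earlier class separately, but for $k\ge 3$ a vertex has $\ell$ coloured neighbours in each earlier class, and their union can exhaust $L(v)$.

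Your dismissal of Alon--Tarsi is also too quick. For $k=2$ the graph is $\ell$-regular and bipartite, so it has an orientation with outdegrees at most $\lceil \ell/2\rceil$ and no odd Eulerian subgraphs. This gives $(\lceil\ell/2\rceil+1)$-choosability, which does prove the claim for $k=2$, $\ell\ge 4$.

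\textbf{Efficient classes.} In the paper an efficient colour class means an E-set (pairwise distance at least 3), not an E$^\ell$-set; the distance-2 pigeonhole in its proof and Example~\ref{tax} indicate this. With that meaning the second assertion is a one-line count, valid for every list assignment. In a proper colouring whose nonempty classes are all E-sets, every closed neighbourhood $N[v]$ meets each class exactly once. That gives $(k-1)\ell+1=|N[v]|\le \ell-1$, which is absurd.

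Under your E$^\ell$ reading the assertion is actually false. Your double count is correct, but it assumes all $\ell-1$ colours are used. For every $k\le \ell-1$, take all lists equal to $[\ell-1]$. Then the colouring $v\mapsto v_0$ is a proper list colouring whose nonempty classes are the SE$^\ell$-sets $S_i^k$ of Theorem~\ref{teo1}.

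Your patch cannot repair this, for two reasons. The statement quantifies over all list assignments, so you may not choose a convenient one. And a colour missing from one list does not stop its class from being an E$^\ell$-set, so ``at most $\ell-2$ classes can be efficient'' does not follow.
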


\begin{proof}
There are $\ell-1$ occurrences of the first entry $v_0$ in any vertex $v=v_0\cdots v_{k\ell-1}$ of $ST_k^\ell$. The remaining $(k-1)\ell$ entries in $\{1,\ldots,k\ell-1\}$ determine the edges incident to $v$; these edges are colored according to displays (\ref{notice}) and (\ref{not2}). The said $\ell-1$ occurrences constitute a list $L(v)$, $\forall v\in V(ST_k^\ell)$. Adjacent vertices $v=v_0\cdots v_{k\ell-1}$ and $w=w_0\cdots w_{k\ell-1}$ have $L(v)\cap L(w)=\emptyset$, since $v_0\neq w_0$, where $w_0=v_j$ for some $j\in\{1,\dots,k\ell-1\}$ and $v_j\neq v_0$, that is: $v_j$ must have a different value in $\{0,\ldots,k-1\}$ than that of $v_0$. Thus, $ST_k^\ell$ is $(\ell-1)$-choosable.

To show that no $(\ell-1)$-list coloring of $ST_k^\ell$ contains only efficient color classes, note that each vertex $w$ at distance 2 from say vertex $v=00\cdots 0v_\ell\cdots,v_{k\ell-1}$
starts with $w_0=0$ and only has one entry in $\{w_1,\ldots,w_{\ell-1}\}$ distinct from 0. But there are $(\ell-1)(\ell-2)$ such vertices $w$ and only $\ell-1$ colors availabe, so by the Pigeonhole Principle, the last sentence of the statement holds in this case. A similar argument holds by setting $v_0\ne 0$ and taking any permutation of the remaining entries of $v$.
\end{proof}

\begin{example}\label{tax}
The Desargues graph $ST_k^\ell=ST_2^3$ is 2-choosable according to Theorem~\ref{choo} and the lists $L(v)$ are deduced on the left of Figure~\ref{fig2} from the colored (not light-gray) positions of the tuples representing the corresponding vertices $v$. Any list coloring associated to this is proper, as for example the one shown on the right of the figure. 
However, there is no $(\ell-1)$-list coloring containing only efficient color classes, as verifiable on the figure, where on the right side there are pairs of vertices at distance 2 with a common color.
\end{example} 

\section{Total coloring of $\ell$-set star transposition graphs}\label{nuevo}

\begin{theorem}
Let $k>1$ and let $\ell>1$. Then, the graph $ST_k^\ell$ has a total coloring.
\end{theorem}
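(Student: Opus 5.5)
The plan is to build the total coloring directly on top of the proper edge coloring of Theorem~\ref{pec}, in which an edge obtained by swapping positions $0$ and $i$ gets color $i\in\{1,\ldots,k\ell-1\}$. I would then color each vertex with a position index that is \emph{not} used by its incident edges. The hypothesis $\ell>1$ is exactly what guarantees such an index exists.

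First I will check that the edge coloring is well defined and proper. If $w$ arises from $v$ by the star transposition $(0\;i)$, then $v$ arises from $w$ by the same transposition. So the color $i$ of the edge $vw$ is the same from both endpoints. The edges at a single vertex $v$ correspond to distinct positions $i$, so they get distinct colors. Note that the set of edge colors at $v$ is exactly $\{i\ge 1: v_i\neq v_0\}$.

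Next, the vertex coloring. For $v=v_0\cdots v_{k\ell-1}$, let $P(v)=\{j\ge1: v_j=v_0\}$. Since each symbol occurs $\ell>1$ times, $P(v)$ has $\ell-1\ge1$ elements. Define $c(v)=\min P(v)$. Vertex--edge conflicts at $v$ cannot occur, because $c(v)\in P(v)$ while the edge colors at $v$ lie in the complement of $P(v)$. It remains to see two things for an edge $vw$ with $w=v\,(0\;i)$:
\begin{itemize}
\item The edge color $i$ differs from $c(w)$. Indeed $w_i=v_0\neq v_i=w_0$, so $i\notin P(w)$. This is really the same statement as the previous one, applied at $w$.
\item $c(v)\neq c(w)$. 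The set $P(w)$ consists of the positions of the symbol $v_i$ in $v$ other than $i$. The set $P(v)$ consists of the positions of the symbol $v_0$ in $v$ other than $0$. Since $v_0\neq v_i$, these sets are disjoint, so the minima differ.
\end{itemize}

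Together these give a total coloring using only the colors $1,\ldots,k\ell-1$. That is $\Delta(ST_k^\ell)+1+(\ell-1)$ colors at most, since $\Delta=(k-1)\ell$.

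I expect the only delicate point to be the adjacency check $c(v)\neq c(w)$, that is, tracking how the positions of each symbol move under the transposition $(0\;i)$. The disjointness argument above handles this.

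If that step failed, I would fall back on the trivial alternative. There one colors each vertex by $k\ell+v_0$, using a palette disjoint from the edge colors; this is proper because adjacent vertices have different first entries, by the argument in the proof of Theorem~\ref{choo}. That fallback still proves the statement, with $k\ell+k-1$ colors.
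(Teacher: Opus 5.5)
Your proof is correct and is essentially the paper's. The paper also combines the position-indexed edge coloring of Theorem~\ref{pec} with a vertex color drawn from the list $L(v)$ of positions $j\ge 1$ with $v_j=v_0$ (your $P(v)$, as in Theorem~\ref{choo}), and it uses the fact that adjacent vertices have disjoint lists; your choice $c(v)=\min P(v)$ is one such selection and covers $\ell=2$ without the paper's separate case.

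One small arithmetic slip: the palette $\{1,\ldots,k\ell-1\}$ has exactly $k\ell-1=\Delta+\ell-1$ colors, one fewer than your stated bound $\Delta+\ell$. In particular, for $\ell=2$ your coloring is an optimal $(\Delta+1)$-total coloring.
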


\begin{proof} For $\ell>2$,
the proper edge coloring of Theorem~\ref{pec} and any list selection of a proper vertex coloring as in Theorem~\ref{choo} constitute a total coloring of $ST_k^\ell$, since the positions of the entries used for any vertex $v$ and its neighbors are pairwise different, and the positions of the entries used for $v$ and its incident edges are also pairwise different. 

The case $\ell=2$ only differs from that in the previous paragraph in that the list $L(v)$ of each vertex $v$ of $ST_k^\ell=ST_k^2$ has just one color, so choosability is not properly applicable. Such only color differs from the colors of its adjacent vertices, that have pairwise different colors, and differs from the colors of its incident edges, that also have pairwise different colors, so a total coloring also exists in this case.
\end{proof}

\begin{theorem}\label{teofig2}
Let $k>1$. Then, the graph $ST_k^2$ has an efficient coloring via the $\Sigma$E$\,^2$-sets $\Sigma_i^k$ mentioned in item (3) in Section~\ref{s2}.
\end{theorem}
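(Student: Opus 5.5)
The plan is to verify directly that colouring each vertex $v$ by the unique index $i\in\{1,\ldots,2k-1\}$ with $v\in\Sigma_i^k$ is an efficient colouring. First, the sets $\Sigma_i^k$ partition $V(ST_k^2)$. Every vertex $v=v_0\cdots v_{2k-1}$ contains exactly two occurrences of the symbol $v_0$, so there is exactly one position $p=p(v)\in\{1,\ldots,2k-1\}$ with $v_p=v_0$. This gives a vertex colouring with the $2k-1$ colours $\{1,\ldots,2k-1\}$. Since $ST_k^2$ is $(2k-2)$-regular (the neighbours of $v$ correspond to the $2(k-1)$ positions holding a symbol different from $v_0$), the colouring is efficient exactly when, for every $v$, the closed neighbourhood $N[v]$ receives all $2k-1$ colours, each exactly once.

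The key computation is the colour of a neighbour. Let $v_0=a$ with partner position $p$, and let $w$ be obtained by swapping $v_0$ with $v_j=b\neq a$. Then $w_0=b$. The symbol $b$ occurs in $v$ at $j$ and at one other position $j'$. Since $v_0=a\ne b$, we have $j'\neq 0$, and $j'\ne j$, so the swap leaves $w_{j'}=b$ untouched. Hence $w\in\Sigma_{j'}^k$: the colour of $w$ is the position of the \emph{other} copy of the symbol that was moved to the front.

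Now let $J$ be the set of positions of $v$ holding symbols different from $a$, so $J=\{1,\ldots,2k-1\}\setminus\{p\}$ and $|J|=2k-2$. The map $j\mapsto j'$ is a fixed-point-free involution of $J$, pairing the two copies of each symbol $b\neq a$; in particular it is a bijection $J\to J$. The neighbours of $v$ are indexed by $j\in J$, and their colours $j'$ therefore run over $J$ bijectively. Adding the colour $p$ of $v$ itself, $N[v]$ receives every colour of $\{1,\ldots,2k-1\}$ exactly once, which proves the statement.

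As a consequence, each $\Sigma_i^k$ is independent, and every vertex outside it has exactly one neighbour in it. Two vertices of $\Sigma_i^k$ at distance $2$ would share a neighbour whose closed neighbourhood repeats colour $i$, which is impossible. So each $\Sigma_i^k$ is an E-set with pairwise distances at least $3$, matching the E-set terminology in the statement. Optionally, combining this vertex colouring with the edge colouring of Theorem~\ref{pec} gives a total colouring, though that is not needed here. I expect no real obstacle. The only delicate point is checking that the swap never disturbs the second copy of $b$, which holds because $0\notin J$.
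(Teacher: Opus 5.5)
Your proof is correct, and it takes a different route from the paper's. The paper argues from the total colouring. It singles out two kinds of coloured $6$-cycles: $C(c_1,c_2,c_3)$, with opposite edges equally coloured, and $C(d_1,d_2)$, with two alternating independent triples. It observes that for each colour $d\notin\{c_1,c_2,c_3\}$ the six colour-$d$ edges leaving a type-1 cycle end in vertices pairwise at distance $3$. It then works outward through the adjacent type-2 cycles to ``continue the formation'' of the $\Sigma$E-sets, relying on Figure~\ref{fig3}. The E-set property itself is only addressed later, in Theorem~\ref{ultimo}, and there only the existence of a neighbour in $\Sigma_i^k$ is shown, not its uniqueness.

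Your partner involution $j\mapsto j'$ replaces all of this with one local computation, uniform in $k$, showing that every closed neighbourhood is rainbow. That single fact gives the partition, independence, the exactly-one-neighbour property (uniqueness included), and minimum distance $3$. The paper's route provides structural information about the colouring, namely the $6$-cycle types reused in Corollary~\ref{cutout}. Yours provides a complete, checkable proof.

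One correction: in this paper an efficient colouring is by definition a \emph{total} colouring, so the edge part is not optional. It follows in one line, though. In Theorem~\ref{pec} the edge joining $v$ to its swap $w$ at position $j$ gets colour $j$, the same from either end. We have $j\ne p(v)$ since $v_j\ne v_0$, and $j\ne j'=p(w)$. So at each vertex the vertex colour $p(v)$ and the incident edge colours $J$ partition $\{1,\ldots,2k-1\}$. This gives a total colouring with $\Delta+1=2k-1$ colours whose vertex classes are the $\Sigma_i^k$. Add this line and the argument is complete.
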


\begin{proof}
The Pigeonhole Principle, as applied in the proof of Theorem~\ref{choo}, cannot be applied properly in this case $\ell=2$, where the SE$^\ell$-sets $S_i^k$ are used however yielding an efficient coloring. But employing the sets $\Sigma_i^k$, we find that there are two {\it types} of colored 6-cycles in a total coloring of $ST_k^2$, illustrated in Figure~\ref{fig3} (see Example~\ref{exfig2} below), namely
\begin{enumerate}
\item 6-cycles, denoted $C(c_1,c_2,c_3)$, having three pairs of opposite edges, where each pair has its two edges with the same color, a total of three colors $c_1,c_2,c_3$;
\item 6-cycles, denoted $C(d_1,d_2)$, having two triples of independent edge of alternate colors $d_1,d_2$.
\end{enumerate}
For any 6-cycle $C=C(c_1,c_2,c_3)$ of type 1 and each color $d\notin\{c_1,c_2,c_3\}$, the six edges of color $d$ with just one endvertex in $C$ have their remaining six endvertices at pairwise distance 3 between them. This is the only way to get vertices of a common color in $ST_k^2$, at pairwise distance 3, and such vertices form part of a $\Sigma$E-set of $ST_k^\ell$ by means of the same (continued) argument presented so far. In fact, those six edges of color $d$ determine six triples of independent color-$d$ edges in six corresponding 6-cycles $C'$ of type 2 adjacent to $C$. In each such $C'$, the remaining triple of independent edges separates $C'$ not only from $C$ but from two other 6-cycles of type 1, fact useful in continuing the formation of the $\Sigma$E-sets.    
\end{proof}

\begin{example} \label{exfig2}
Figure~\ref{fig3} represents two embeddings of a 6-cycle $C=C(2,3,4)$ of type 1 (forming the inner border of an annular gray area, darker on the left, lighter on the right) in $ST_3^2$ surrounded by two different sextuples of 6-cycles of type 2, where colors are numbered 1 = red, 2 = blue, 3 = green, 4 = hazel and 5 = black. On the left, (resp. right), representation of the figure, the six (underlined) vertices at distance 1 from $C$ (on the external border of the annular gray area) have color 5 = black, (resp. 1 = red), realized by edges of color 1 = red, (resp. 5 = black).  Clearly, the distance between the underlined vertices is 3.
\end{example}

\begin{corollary}\label{cutout}
In $ST_k^2$ for fixed $k>2$, the union of all 6-cycles of the form $C(d_1,d_2)$, $C(d_1,d_3)$, $C(d_1,d_4)$, $C(d_1,d_5)$ of type 2 with independent edge triples sharing common color $d_1$, where the colors $d_i$ are pairwise different ,for $i=1,2,3,4,5$, form toroidal subgraphs $T_{d_1}(d_2,d_3,d_4,d_5)$ containing the disjoint union of cycles $C(c,c',c'')$ of type 1, ($\{c,c',c'')\subset\{d_2,d_3,d_4,d_5\}$),  with minimum distance 1 produced by edges of color $d_1$. Moreover, from each such $C(c,c',c'')$ depart six edges having the left color $d\in\{d_2,d_3,d_4,d_5\}\setminus\{c,c',c''\}$ with the remaining endvertices belonging to the E-set $\Sigma_{d_1}^k$ of vertices with color $d_1$.   
\end{corollary}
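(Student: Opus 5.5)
The plan is to use the edge coloring of Theorem~\ref{pec}, restricted to $\ell=2$, so that $k\ell-1=2k-1$ edge colors $1,\dots,2k-1$ are positions. In this coloring an edge swaps position $0$ with position $i$ and gets color $i$. The first step is to describe the 6-cycles of $ST_k^2$ concretely in terms of positions. A 6-cycle arises from three positions $a,b,c$ whose entries together with the first entry form a configuration where repeatedly swapping position $0$ with positions in a fixed cyclic pattern returns to the start after six steps. There are two ways this can happen. If $v_0,v_a,v_b,v_c$ involve the pattern needed for the alternation $a,b,c,a,b,c$, we get a type-1 cycle $C(a,b,c)$, where opposite edges share a color. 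If instead the cycle alternates two positions $d_1,d_2$, with three symbols moving around positions $0,d_1,d_2$, we get a type-2 cycle $C(d_1,d_2)$, which has the independent triples of colors $d_1$ and $d_2$. I would verify this by writing the words explicitly: three distinct symbols rotating among positions $\{0,d_1,d_2\}$ give $C(d_1,d_2)$.

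Next, fix colors $d_1,\dots,d_5$, that is, five distinct positions. I would take the union of all type-2 cycles $C(d_1,d_j)$ for $j=2,\dots,5$ through a vertex. Restricting to the positions $\{0,d_1,\dots,d_5\}$ and keeping all other entries fixed, this union is a finite subgraph. I would show it is 2-cell embedded on a torus by checking the local rotation. At each vertex, the edges of colors $d_1,\dots,d_5$ that are present are cyclically ordered so that every face is either a hexagon of type 2 (containing $d_1$) or a hexagon of type 1 with colors in $\{d_2,\dots,d_5\}$. Counting vertices, edges and faces then gives Euler characteristic $0$. The type-1 faces $C(c,c',c'')$ are pairwise vertex-disjoint because each vertex lies in exactly one such face once $d_1$ is excluded. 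They are joined by $d_1$-edges, which gives minimum distance 1.

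For the final claim, take a type-1 cycle $C(c,c',c'')$ in $T_{d_1}$ and let $d$ be the remaining color in $\{d_2,\dots,d_5\}$. I would invoke the argument in the proof of Theorem~\ref{teofig2}: the six $d$-colored edges leaving $C$ end in six vertices at pairwise distance 3. Their first entries, read through the $\Sigma$ description (the symbol $v_0$ repeated exactly at position $d_1$), are exactly the condition defining $\Sigma_{d_1}^k$. So I would check directly that after swapping position $0$ with position $d$, the new first symbol coincides with the entry at position $d_1$. This places those endvertices in the E-set of color $d_1$.

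The main obstacle is the toroidal embedding claim. Its statement is ambiguous, since "union over $d_2,\dots,d_5$" must be read with consistent symbol content. I would first work out $k=3$ explicitly, in the spirit of Figure~\ref{fig3}, to fix the rotation system. I would then argue that the general case reduces to it, because only the six relevant positions move.
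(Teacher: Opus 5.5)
Your plan is sound and more explicit than the paper's proof. The paper only exhibits the case $k=3$, $d_1=5$ as the rectangular cutout of Figure~\ref{fig4} (glued with a half-length shift) and then says ``the general case works likewise''. Your reduction of general $k$ to $k=3$, by freezing the entries outside $\{0,d_1,\dots,d_5\}$, is that same idea made precise.

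Your direct check for the last claim is cleaner than the paper's appeal to the figure. The pair left over by $\{0,c,c',c''\}$ sits at positions $d$ and $d_1$, so swapping $0$ with $d$ gives a vertex with equal entries at $0$ and $d_1$. Distance $3$ then follows from Theorem~\ref{ultimo}, so you do not need the argument of Theorem~\ref{teofig2}.

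Your ``consistent symbol content'' caveat is necessary, not cosmetic. For $k\ge4$, take a component whose six moving positions carry $x,x,y,y,s,t$. It contains type-1 cycles lying in the union whose $d$-edges end at vertices with first entry in $\{s,t\}$ different from the entry at $d_1$, so they are not in $\Sigma_{d_1}^k$. It also has vertices of degree $5$ in the union. So state explicitly that only components where $\{0,d_1,\dots,d_5\}$ carry three symbols twice each are meant.

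The one step you assert rather than prove is the toroidal one. The count $V-E+F=72-108+36=0$ is equally consistent with a Klein bottle or a disconnected union. You must exhibit a consistent rotation, which makes the surface orientable, and you must check connectivity. Both are short.

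\emph{Rotation.} Each vertex of $T_{d_1}$ has degree $3$: color $d_1$, plus the two positions of the symbol that is neither $v_0$ nor $v_{d_1}$. Each vertex also lies on exactly one type-1 hexagon. So the rotation is determined by giving each type-1 hexagon a direction, i.e.\ a cyclic order of its three colors. Then $C(d_1,d)$ is a face exactly when the three type-1 hexagons through its three $d$-edges traverse them coherently. This holds if the hexagon with colors $\{d_2,\dots,d_5\}\setminus\{e\}$ gets the cyclic order of the face opposite $e$ in a fixed orientation of the boundary of the tetrahedron on $\{d_2,\dots,d_5\}$, independently of the symbols.

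\emph{Connectivity.} Index the type-1 hexagons by $(e,z)$, where $z$ is the symbol at positions $e$ and $d_1$. A $d_1$-edge joins $(e,z)$ to $(p,x)$ exactly when $p\ne e$ and $x\ne z$. So the hexagons are linked as in the tensor product $K_4\times K_3$, which is connected.
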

\begin{figure}[htp]
\includegraphics[scale=0.57]{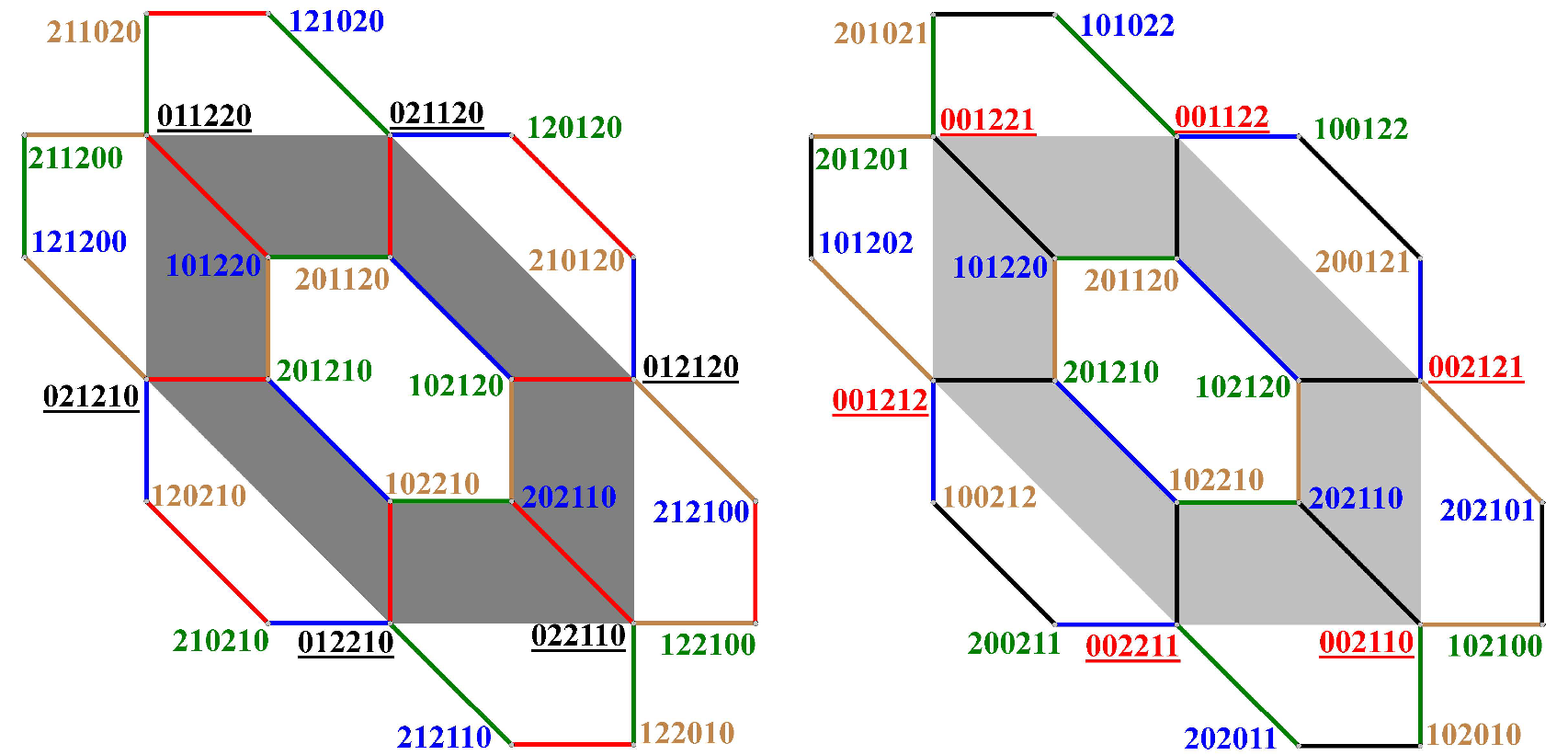}
\caption{Example for the proof of Theorem~\ref{teofig2} and Example~\ref{exfig2}.}
\label{fig3}
\end{figure} 

\begin{proof} The proof is shown here for $k=3$ and $d_1=5$. The general case works likewise. The E-set $\Sigma_{d_1}^k$ is given as the set of vertices $v=d_1v_1v_2v_3v_4d_1$ of degree 1 in the toroidal subgraph $T_5(1,2,3,4)$ obtained from the rectangular cutout shown in Figure~\ref{fig4} by identifying the top and bottom borders, with the bottom border displaced half its length to the right with respect to the top border, (as shown by the displacement between the two parts of the light-gray area, redrawn here from the one in Figure~\ref{fig3}, as is the dark-gray area, too) and identifying in parallel the left and right borders. 
Inside each $C(c,c',c'')$, a pair $d_1v_i$ is highlighted, where $v_i$ must be transposed with the initial $d_1$ in order to transform each vertex $v$ into its adjacent vertex in the 6-cycle $C(c,c',c'')$, ($d_1\in\{0,1,2\}$, $v_i\in\{1,2,3,4\}$). Only a few (underlined) ''black" vertices $v=d_1v_1v_2v_3v_4d_1$ are specified in the figure, but all the otherwise colored vertices are.
\end{proof}

\section{Efficient coloring of 2-set star transposition graphs}
\label{prelim}

In Theorem~\ref{teofig2}, it was shown that the graphs $G=ST^2_k$, ($0<k\in\mathbb{Z}$), 
satisfy the conditions of the following theorem, so that it can be applied in Section~\ref{s3}.

\begin{figure}[htp]
\includegraphics[scale=1.65]{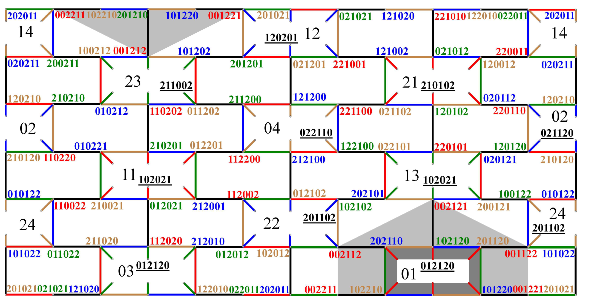}
\caption{Cutout of toroidal subgraph in Corollary~\ref{cutout} in relation to Example~\ref{exfig2}.}
\label{fig4}
\end{figure} 

Let $3<h\in 2\mathbb{Z}$. Let $G=(V(G),E(G))$ be a connected $(h-2)$-regular graph with an efficient coloring using up the color set $[h]\setminus\{0\}=\{1,\ldots,h-1\}$. 
We use the inequality $\chi''(G)\ge\Delta(G)+1$, where $\Delta(G)$ is the maximum degree of $G$ \cite{tc-as}. In our case, 
an efficient coloring provides a partition of $V(G)$ into $h-1$ subsets $W_1,\ldots,W_{h-1}$, where $W_i$ is formed by those vertices of $G$ having color $i$, for each $i\in[h]\setminus\{0\}$. Moreover, each $W_i$ is an E-set of $G$, for $i\in[h]\setminus\{0\}$. 
In Theorem~\ref{chi}, a family of disjoint subgraphs of $G$ may contain (degenaratively) just one subgraph, in which case the union of such family is still said to be a disjoint union, as is the case of Example~\ref{joder!}, below. 

\begin{theorem}\label{chi}  Let $4<h\in 2\mathbb{Z}$ and let $i\in[h]\setminus\{0\}$.
Let $G=(V(G),E(G))$ be a connected $(h-2)$-regular graph with an efficient coloring using up the color set $[h]\setminus\{0\}=\{1,\ldots,h-1\}$. Such a coloring provides a partition of $V(G)$ into $h-1$ classes $W_1,\ldots,W_{h-1}$, where $W_i$ is formed by those vertices of $G$ having color $i$, for each $i\in[h]\setminus\{0\}$. Then each $G\setminus W_i$ is a connected $(h-3)$-regular subgraph such that at most $h-1$ colors are required for it to have an efficient coloring.  
Letting $E_i$ be the set of edges with color $i$ in $G\setminus W_i$, we have that:\begin{enumerate} \item $G\setminus W_i\setminus E_i$ is the disjoint union of copies of regular subgraphs of degree $h-4$ having corresponding efficient colorings with $h-3$ colors, each copy obtained from $[h]\setminus\{0,i\}$ by removing the edges of a specific color $j\ne i$;
\item $G\setminus E_i$ is a non-bipartite $(h-2,h-3)$-biregular graph.
\end{enumerate} 
\end{theorem}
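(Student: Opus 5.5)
The plan is to extract everything from two local facts at a single vertex, and then treat connectivity and non-bipartiteness as separate global arguments.

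\emph{Local facts.} Fix $i\in[h]\setminus\{0\}$. Since $G$ is $(h-2)$-regular and the coloring is an efficient total coloring with $h-1$ colors, at every vertex $v$ the color of $v$ together with the $h-2$ colors of its incident edges exhaust $[h]\setminus\{0\}$, each color used exactly once. In particular, $v$ sees every color other than its own on exactly one incident edge. Likewise, $v$ together with its $N(v)$ exhausts all $h-1$ colors, so each vertex outside $W_i$ has exactly one neighbor in $W_i$. Hence $W_i$ is an E-set.

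\emph{Regularity and colors of $G\setminus W_i$.} Deleting $W_i$ lowers every remaining degree by exactly one, so $G\setminus W_i$ is $(h-3)$-regular. Restricting the original total coloring to $G\setminus W_i$ uses at most the $h-1$ original colors, which gives the bound on the number of colors.

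\emph{The matching $E_i$.} For $v\notin W_i$, let $w$ be its unique neighbor in $W_i$. The edge $vw$ is not colored $i$, because $w$ has color $i$. So the unique color-$i$ edge at $v$ goes to a vertex outside $W_i$. Therefore $E_i$ is a perfect matching of $G\setminus W_i$. It follows that $G\setminus W_i\setminus E_i$ is $(h-4)$-regular, and at each of its vertices the vertex color plus the edge colors are exactly $[h]\setminus\{0,i,c(vw)\}$.

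\emph{Item 1.} I would group the vertices $v$ according to the color $j=c(vw)$ of their edge into $W_i$. Within each class, every vertex misses exactly the colors $i$ and $j$, so a class carries an efficient coloring with the $h-3$ colors of $[h]\setminus\{0,i,j\}$. It remains to show that no edge of $G\setminus W_i\setminus E_i$ joins two different classes. My first attempt: take such an edge $uv$, with $u,v$ having $W_i$-neighbors $u',v'$. Then look at the closed walk $u'uvv'$ and use that $W_i$-vertices are pairwise at distance $\ge 3$, together with the once-per-color condition, to reach a contradiction. This is the step I trust least, and it may need the actual structure of $ST_k^2$ (the 6-cycles of types 1 and 2 from Theorem~\ref{teofig2}) rather than the abstract hypotheses.

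\emph{Item 2.} Vertices of $W_i$ have no color-$i$ edges, so they keep degree $h-2$ in $G\setminus E_i$. Every other vertex loses exactly one edge, its $E_i$-edge, and has degree $h-3$. This gives the $(h-2,h-3)$-biregularity.

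\emph{Non-bipartiteness.} I would use a parity/counting argument. In a biregular bipartite graph with degrees $h-2$ and $h-3$, the two sides would be forced to be exactly $W_i$ and its complement. But every vertex outside $W_i$ has only one neighbor in $W_i$ and degree $h-3\ge 2$ (since $h>4$), so it has neighbors outside $W_i$. If a single part of a bipartition mixed degrees, I would derive a contradiction by counting edges between the parts. If that counting does not close, I would instead exhibit an explicit odd cycle, built from the type-1 6-cycles together with color-$j$ edges.

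\emph{Connectivity, and the main obstacle.} Connectivity of $G\setminus W_i$ is the second delicate point. I would reroute any path through $w\in W_i$: take two neighbors $a,b$ of $w$ and connect them by a path avoiding $W_i$. The neighbors of $w$ lie in different color classes, so this has to use the 6-cycle structure rather than the abstract hypotheses.
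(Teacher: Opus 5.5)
Your local facts are correct and more explicit than the paper's. $W_i$ is an E-set and $G\setminus W_i$ is $(h-3)$-regular. $E_i$ is a perfect matching of $G\setminus W_i$, since the color-$i$ edge at $v\notin W_i$ cannot end in $W_i$. Hence $G\setminus W_i\setminus E_i$ is $(h-4)$-regular, and $G\setminus E_i$ has degree $h-2$ on $W_i$ and $h-3$ elsewhere. The paper's proof is this same degree count followed by bare assertions, so it does not supply what you are missing. The genuine gap is in the three remaining claims: the class structure in item~1, non-bipartiteness, and connectivity. None of them can be derived from the hypotheses of Theorem~\ref{chi}, because all three fail under those hypotheses.

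For item~1, take the paper's own Example~\ref{joder!}. $K_5$ satisfies the hypotheses with $h=6$. For $i=0$, the edges $01,02,03,04$ have colors $3,1,4,2$, so $1,2,3,4$ lie in four different classes of your grouping. Yet $G\setminus W_0\setminus E_0$ is the single $4$-cycle $(1,2,4,3)$. Its inherited coloring uses four colors, and $\chi''(C_4)=4$, so no coloring with $h-3=3$ colors exists. Girth is not the issue. Pull this coloring back along the covering of $K_5$ by the toroidal grid $C_5\times C_5$, $(x,y)\mapsto x+2y \bmod 5$, coloring edge $uv$ by $3(c(u)+c(v))\bmod 5$. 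This is a girth-$4$ instance with the same behavior. So no argument on the walk $u'uvv'$ can reach a contradiction.

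Your non-bipartiteness step fails as well. A bipartition of $G\setminus E_i$ need not separate $W_i$ from its complement, and edge counting rules out nothing. The same coloring on $C_{10}\times C_{10}$ is still well defined and locally bijective, since $5\mid 10$. This gives a connected $4$-regular instance that is bipartite, so every $G\setminus E_i$ is bipartite.

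Connectivity fails too. Take the double cover of $K_5$ in which only the edge $01$ is crossed. It is connected and inherits an efficient coloring, but deleting the two lifts of $0$ leaves two disjoint copies of $K_4$. Crossing a single edge at a vertex of $W_i$ does the same over $C_5\times C_5$.

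So your fallbacks to the structure of $ST^2_k$ are the only viable route. They establish the specialization in Theorem~\ref{app}, not Theorem~\ref{chi}. There they do work:
\begin{itemize}
\item For $v\notin\Sigma_i^k$, the edge into $\Sigma_i^k$ has color $p$, where $p\ne i$ is the position with $v_p=v_i$.
\item Every edge $(0\;c)$ of $G\setminus\Sigma_i^k\setminus E_i^k$ has $c\notin\{i,p\}$, so it fixes positions $i$ and $p$. The components are therefore copies of $ST^2_{k-1}$ indexed by $(p,v_i)$.
\item $E_i^k$ joins copy $(p,b)$ to every copy $(m,a)$ with $m\ne p$ and $a\ne b$. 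This gives connectivity for $k\ge3$.
\item Transposing position $0$ of $001122$ successively with positions $2,1,4,2,1,2,4$ traces a $7$-cycle of $ST^2_3$ using only colors $1,2,4$. After relabeling positions, this yields an odd cycle in every $ST^2_k\setminus E_i^k$ with $k\ge3$.
\end{itemize}
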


\begin{example}\label{joder!}
The complete graph $G=K_5$ with vertex set $V(G)=[5]=\{0,1,2,3,4\}$ has an efficient coloring with each vertex referred to by its own color in $[5]$ and such that the colors for the edges $01,12, 23, 34, 40, 02, 13, 24, 30, 41$ are $3,4,0,1,2,1,2,3,4,0$, respectively. Then, $G\setminus W_0\setminus E_0=G\setminus\{0\}\setminus\{23,41\}$ is the disjoint union of a sole component with its inherited (or restricted) coloring in color set $[5]\setminus\{0\}=\{1,2,3,4\}$ still being efficient. This is a case that justifies the initial assumption that $3<h\in 2\mathbb{Z}$, previous to the theorem.      
\end{example}

\begin{proof}  By definition of efficient coloring, each $W_i$ is an E-set. Deleting $W_i$ from $G$ removes all the edges incident to the vertices of $W_i$, so $G\setminus W_i$ has an efficient coloring, which is not efficient since there is an edge color lacking incidence to each particular vertex of $G\setminus W_i$.
To establish item 1, notice that removal of $E_i$ from $G\setminus W_i$ for $h>4$, leaves us with the graph induced by the edges of all colors other than color $i$, which necessarily disconnects $G\setminus W_i$, again because of the definition of efficient coloring. 
To  establish item 2, the removal of the edges with color $i$ leaves their endvertices with degree $h-3$ and forming a vertex subset of the resulting $G\setminus E_i$, while the remaining vertices have color $i$, degree $h-2$ and form a stable vertex set.
This completes the proof of the theorem.
\end{proof}

\begin{example}
The odd complete graphs $K_{2n+1}$ ($n\ge 1$) satisfy the conditions of Theorem~\ref{chi} with $h=n+1$, but does not satisfy our initial assumption that the girth be larger than 3. In fact, a total coloring \cite{Yap} of $K_{2n+1}$ is given by numbering the vertices $0, 1, \ldots, 2n$ so that the edges $(j-i)(j+i)$ with $j\in\{0,\ldots,2n\}$ get color $j$ while each vertex $j$ gets precisely color $j$, where $0<i\le n$ and $j\pm i$ is taken mod $2n+1$. This total coloring is efficient, with each vertex color class formed by just one vertex. 
\end{example}

\section{$\Sigma$E-sets of star 2-set transposition graphs}\label{s3}

Let $k>1$. As said at the end of Section~\ref{s2}, 
we consider here only the graphs $ST^2_\ell$ and their vertex subsets $\Sigma_i^k$ formed by the vertices $v_0v_1\cdots v_{2kl-1}$ such that $v_0=v_i$ for exactly one value $i\in\{1,\ldots,2k-1\}$.
We notice that the graph $ST^2_k$ has $\frac{(2k)!}{2^k}$ vertices and regular degree $2k-2$.

\begin{theorem}\label{ultimo}
Let $i\in[2k]\setminus\{0\}=\{1,\cdots,2k-1\}$ and let $\Sigma_i^k$ be the set of vertices $v_0v_1\cdots v_{2k-1}$ of $ST^2_k$ such that $v_0=v_i$, where $i=1,\ldots,2k-1$. 
Then, the $2k-1$ sets $\Sigma_i^k$ are $\Sigma$E-sets forming a partition of $V_k^2=V(ST^2_k)$. 
\end{theorem}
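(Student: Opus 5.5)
The plan is to verify the two required properties directly from the definitions. First, the sets $\Sigma_i^k$ partition $V_k^2$. Second, each $\Sigma_i^k$ is an E-set, i.e.\ an E$\,^1$-set in the sense of Section~\ref{s1}. For the second property it suffices to show that $\Sigma_i^k$ is independent and that every vertex outside $\Sigma_i^k$ has exactly one neighbor in it; minimum distance $3$ then follows, since two vertices of $\Sigma_i^k$ at distance $2$ would give a vertex outside with two neighbors in $\Sigma_i^k$. The whole argument rests on one bookkeeping observation about how a star transposition moves the ``partner'' of the initial symbol.

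\emph{Partition.} For $v=v_0\cdots v_{2k-1}\in V_k^2$, the symbol $v_0$ occurs exactly twice. Hence there is a unique position $p(v)\in\{1,\ldots,2k-1\}$ with $v_{p(v)}=v_0$, and $v\in\Sigma_i^k$ exactly when $p(v)=i$. So the sets $\Sigma_i^k$, for $i=1,\ldots,2k-1$, are pairwise disjoint and cover $V_k^2$. Each is nonempty because $k>1$.

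\emph{Key observation.} Let $w$ be the neighbor of $v$ obtained by swapping positions $0$ and $j$, where $v_j\neq v_0$, so $j\notin\{0,p(v)\}$. Then $w_0=v_j$ and $w_j=v_0$, and all other entries are unchanged. The second copy of the symbol $v_j$ in $v$ sits at some position $q\notin\{0,j,p(v)\}$, and it stays at position $q$ in $w$. Therefore $p(w)=q$; that is, $p(w)$ is the partner position of $v_j$ inside $v$.

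\emph{Independence.} Suppose $v\in\Sigma_i^k$, so $v_i=v_0$, and let $w$ be any neighbor of $v$. By the observation, $p(w)=i$ would force $v_i=v_j\neq v_0$, a contradiction. So no two vertices of $\Sigma_i^k$ are adjacent.

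\emph{Unique domination.} Suppose $v\notin\Sigma_i^k$, so $p(v)\neq i$ and hence $v_i\neq v_0$. Let $j$ be the other position of the symbol $v_i$ in $v$; then $j\notin\{0,i,p(v)\}$ and $v_j=v_i\neq v_0$, so the swap of positions $0$ and $j$ is an edge. By the observation, the resulting $w$ has $p(w)=i$, so $w\in\Sigma_i^k$. Conversely, a neighbor obtained via some $j'$ lies in $\Sigma_i^k$ only if the partner of $v_{j'}$ is at $i$. This forces $v_{j'}=v_i$ and $j'\neq i$, so $j'=j$. Hence $v$ has exactly one neighbor in $\Sigma_i^k$.

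\emph{Main obstacle.} The only delicate point is the case analysis in the observation. One must check that the partner position $q$ of $v_j$ is not disturbed by the swap: $q\neq 0$ and $q\neq j$ are immediate. One must also check that the case $j=i$ is harmless: it does not yield a vertex of $\Sigma_i^k$, because then $p(w)=q\neq i$. The remaining claims follow by routine checking.
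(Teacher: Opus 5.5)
Your proof is correct and takes the same route as the paper: you partition the vertices by the position of the second copy of $v_0$, and for $v\notin\Sigma_i^k$ you apply the star transposition $(0\;j)$, where $j$ is the other position of the symbol $v_i$. Your partner-position observation also proves that this neighbor is the only one in $\Sigma_i^k$ and that $\Sigma_i^k$ is independent, which the paper asserts (``just one vertex'') without verifying.
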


\begin{proof}
The sets $\Sigma_i^k$ are $\Sigma$E-sets, since every vertex of $V_k^2\setminus\Sigma_i^k$ is a neighbor of just one vertex of $\Sigma_i^k$, for each $i\in[2k]\setminus\{0\}$.
 In fact, every vertex $v\in V_k^\ell\setminus\Sigma_i^k$ is either of the form $v_0\cdots v_i\cdots v_j\cdots v_{2k-1}$ or $v_0\cdots v_j\cdots v_i\cdots v_{2k-1}$ with $v_i=v_j$.
 Applying the transposition $(0\;j)$ to $v$ yields a neighbor of $v$ in $\Sigma_i^k$.
 Moreover, the sets $\Sigma_i^k$  form a partition of $V_k^2$, since each $\Sigma_i^k$ is the set of vertices $v_0v_1\cdots v_{k\ell-1}$ of $ST^2_k$ with $v_0=v_i$, for $i=1,\ldots,2k-1$.
\end{proof}

\begin{theorem}\label{t2}
Let $k>2$ and let $i\in[2k]\setminus\{0\}$. Let $E_i^k$ be the set of edges having color $i$ in $G\setminus\Sigma_i^k$, provided by Theorem~\ref{pec}. Then, no edge of $E_i^k$ is incident to any vertex of $\Sigma_i^k$. 
\end{theorem}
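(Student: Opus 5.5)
Throughout, I read the statement as follows: $E_i^k$ is the set of edges of $ST^2_k$ that receive color $i$ under the edge coloring (\ref{notice}) of Theorem~\ref{pec}, i.e.\ the edges realized by the star transposition $(0\;i)$. The claim is then that no such edge has an endvertex in $\Sigma_i^k$. The plan is to prove this directly from that coloring rule together with the defining property of $\Sigma_i^k$; no structure beyond Theorem~\ref{pec} and the definition in Theorem~\ref{ultimo} should be needed.

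\emph{Step 1 (description of color-$i$ edges).} By (\ref{notice}), an edge $(v,w)$ has color $i$ exactly when $w$ is obtained from $v$ by swapping $v_0$ and $v_i$, with $v_i\neq v_0$. In particular, each endvertex of a color-$i$ edge has its entries in positions $0$ and $i$ distinct. For $v$ this is $v_0\ne v_i$. For $w$ it is $w_0=v_i\ne v_0=w_i$. One point to settle here is that the supplementary rules (\ref{not2}) assign colors $1$ and $k\ell-1=2k-1$ to transpositions in positions $1$ and $2k-1$. This is consistent with (\ref{notice}), so the color-$i$ edges are still exactly the $(0\;i)$-edges for every $i\in\{1,\ldots,2k-1\}$.

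\emph{Step 2 (vertices of $\Sigma_i^k$ have no color-$i$ edge).} A vertex $u\in\Sigma_i^k$ satisfies $u_0=u_i$ by definition. So the transposition $(0\;i)$ would swap two equal symbols, and this is excluded from the edge set of $ST^2_k$. Hence $u$ has no incident edge of color $i$. This is exactly the efficiency phenomenon: color $i$ is the one edge color missing at $u$, and that missing color is the vertex color of $u$ in the efficient coloring of Theorem~\ref{teofig2}. Combining with Step 1, if some edge of $E_i^k$ were incident to $u\in\Sigma_i^k$, then $u$ would need $u_0\neq u_i$, a contradiction.

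\emph{Step 3 (the hypothesis $k>2$).} I would remark that the argument above does not really use $k>2$. That hypothesis presumably enters the surrounding framework rather than this claim itself: girth larger than $3$, and the degree condition $h=2k>4$ needed to apply Theorem~\ref{chi}. The expected obstacle is not mathematical depth but the ambiguity in the phrase ``edges having color $i$ in $G\setminus\Sigma_i^k$''. If that phrase means edges of the subgraph $G\setminus\Sigma_i^k$, the statement is tautological. The substantive content, which Steps 1 and 2 establish, is that deleting $\Sigma_i^k$ removes no color-$i$ edges. Consequently $E_i^k$ coincides with the full color-$i$ class of $ST^2_k$, and it is a perfect matching of $V_k^2\setminus\Sigma_i^k$. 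The matching is perfect because every vertex outside $\Sigma_i^k$ has $v_0\ne v_i$, and so it is incident to exactly one $(0\;i)$-edge. The final write-up would state this stronger form, as it is what feeds into item~1 of Theorem~\ref{chi}.
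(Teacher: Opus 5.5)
Your proof is correct and is essentially the paper's argument: the paper checks only the case $i=2k-1$, observing that color-$(2k-1)$ edges swap the first and last entries while every vertex of $\Sigma_{2k-1}^k$ has equal first and last entries, and you run the same observation for every $i$ (color-$i$ edges are $(0\;i)$-swaps of distinct symbols, while $u_0=u_i$ on $\Sigma_i^k$). Your additional remarks are also correct: $k>2$ is not needed, and $E_i^k$ is the whole color-$i$ class, forming a perfect matching of $V_k^2\setminus\Sigma_i^k$.
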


\begin{proof}  Corollary~\ref{cutout} and Figure~\ref{fig4} offer a case of the statement for $d_1=v_{2k-1}$, as all the (black) edges $(v,w)$ of color $2k-1$ are obtained by permuting the first and last entries of $v$ and $w$, while a vertex $u$ of color $2k-1$ has the first and last entries with the same color, preventing $u$ to be the endvertex of color $2k-1$.
\end{proof}

\begin{example}\label{abrazo} In $ST_2^2$,
$\Sigma_1^2$, depicted in the lower left of Figure~\ref{fig1} as the 6-cycle induced by the external thick (not thin) edges, has an efficient coloring with $\Sigma_1^2=\{0011,1100\}$ in color blue, as is also $E_1^2=\{(0101,1001),(0110,1010)\}$; $\Sigma_2^2=\{0101,1010\}$ in color green, as is also $E_2^2=\{(0110,1100),(0011,1001)\}$; $\Sigma_3^3=\{0110,1001\}$ in color red, as is also $E_3^2=\{(0011,1010),(0101,1100)\}$. (The thin edges appear in Example~\ref{sigue}).
\end{example}

\begin{theorem}\label{illus} The 
partitions into $\Sigma$E-sets $\Sigma_i^k$
of the graphs $ST^2_k$, for $i\in[2k]\setminus\{0\}$, extend to partitions into $\Sigma$E-sets of supergraphs of the graphs $G=ST^2_k$, but such partitions cannot be completed into efficient colorings.
\end{theorem}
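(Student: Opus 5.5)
The plan is to exhibit one natural family of supergraphs and verify both claims on it. For each $k>1$, let $G^+_k$ be the graph on $V_k^2$ whose edges are those of $ST^2_k$ together with all \emph{non-star} transpositions $(a\;b)$, $0<a<b<2k$, that preserve the vertex class of Theorem~\ref{ultimo}. Concretely, $(a\;b)$ is added at $v$ exactly when $v_a\ne v_b$ and $\{a,b\}\cap\{0,i\}=\emptyset$, where $i$ is the unique index with $v_0=v_i$; the thin edges of $ST_2^2$ in Figure~\ref{fig1} (Example~\ref{abrazo}) are the model case. The first step is to check that, in every such supergraph, each $\Sigma_i^k$ is still a $\Sigma$E-set.

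Since the added edges never change which position carries the second copy of $v_0$, both endpoints of every new edge lie in the same $\Sigma_i^k$. Hence no vertex outside $\Sigma_i^k$ gains a neighbour in $\Sigma_i^k$. By the argument of Theorem~\ref{ultimo}, each $v\notin\Sigma_i^k$ then still has exactly one neighbour in $\Sigma_i^k$, namely the image of $v$ under $(0\;j)$ with $v_j=v_i$. The partition property is unchanged, since the vertex set is the same. One caveat: the new edges lie inside the classes, so the $\Sigma_i^k$ are no longer independent. The statement should therefore be read with ``$\Sigma$E-set'' meaning the domination condition of Theorem~\ref{ultimo}. If independence is required, I would instead add only edges between classes that do not create a second neighbour in any class. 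That is a more delicate count, and I would first try it for $k=2$.

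The second step is to show that the partition cannot be completed to an efficient coloring of $G^+_k$. I would use a counting argument. An efficient coloring of an $r$-regular graph uses $r+1$ colors, and each vertex color class must be an E-set, so its size is $|V|/(r+1)$. The partition into $\Sigma$E-sets fixes $2k-1$ classes of equal size $|V|/(2k-1)$, because each $\Sigma_i^k$ has the same cardinality by symmetry of positions. In $G^+_k$, however, the degree $r$ is strictly larger than $2k-2$. Either $G^+_k$ is irregular, which already contradicts the definition of an efficient coloring, or its degree $r$ exceeds $2k-2$, so the color count $r+1>2k-1$ is incompatible with using the given $2k-1$ classes as color classes. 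For $k=2$ I would confirm this directly: the added thin edges join the two vertices of each $\Sigma_i^2$, so the class is no longer stable and cannot be a vertex color class.

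The main obstacle is that the statement is informal: it does not say which supergraphs are meant, nor whether ``$\Sigma$E-set'' in the supergraph must be independent. My first move would be to fix the class-preserving transposition supergraph above and prove the claim for it. Making the non-completability robust means checking both possibilities, a class with internal edges or a degree mismatch. I expect the regularity and degree count of $G^+_k$, which depends on $k$ and on the position $i$, to need a careful case check.
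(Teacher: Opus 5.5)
Your construction breaks down at $k=2$, which is exactly the case the paper illustrates. A vertex $v\in\Sigma_i^k$ gains $\binom{2k-2}{2}-(k-1)=2(k-1)(k-2)$ new edges. So $G^+_k$ is $2(k-1)^2$-regular and no case check is needed. But this count is $0$ for $k=2$, because the two positions outside $\{0,i\}$ always carry the same symbol. Hence $G^+_2=ST^2_2$, and your claim that the degree strictly exceeds $2k-2$ fails. Worse, the partition $\{\Sigma_1^2,\Sigma_2^2,\Sigma_3^2\}$ \emph{does} complete to an efficient coloring, namely the one of Example~\ref{abrazo}. Your ``direct confirmation'' for $k=2$ rests on a misreading. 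The vertices $0011$ and $1100$ differ in all four positions, so no transposition joins them. The thin edges of Example~\ref{sigue} instead join the 6-cycle to two \emph{new} central vertices, producing the 3-cube.

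There is also an obstruction for every $k$. On the fixed vertex set $V_k^2$ you cannot add any edge while keeping the $\Sigma_i^k$ genuine E-sets, i.e.\ independent with every outside vertex having exactly one neighbour inside. An intra-class edge destroys independence. A new edge $uw$ with $w\in\Sigma_j^k$ and $u\notin\Sigma_j^k$ gives $u$ a second neighbour in $\Sigma_j^k$, since by Theorem~\ref{ultimo} it already has one. So your fallback is empty. Your main route works (for $k\ge3$) only under the domination-only reading, and there non-completability is automatic: a class containing an edge can never be a vertex color class.

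The missing idea is to enlarge the vertex set, as the paper does. It adds new vertices, all with a new color $h=2k$ and forming a new class, joined to vertices of the classes $W_i=\Sigma_i^k$. The contradiction then comes from the new edges. In the existing efficient coloring of $ST^2_k$, each old vertex already sees its own color and its $h-2$ edge colors, i.e.\ all of $1,\ldots,h-1$. The new endpoint has color $h$, so the new edge has no admissible color.

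Note that ``completed'' must mean extending that given coloring. For $k=2$, the 3-cube with its four antipodal pairs as vertex classes does admit an efficient 4-total coloring once the 6-cycle's edges may be recolored. So a counting or independence argument like yours cannot give the non-completability there.
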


\begin{proof} Let $G=ST^2_k$ and let $h=2k$.
In the setting of Theorem~\ref{chi}, let $W_i=\Sigma_i^k$, for $i\in[h]\setminus\{0\}$. 
Let us number the vertices of $W_i$, for $i\in[h]\setminus\{0\}$, by writing $W_i=\{w_i^1,\ldots,w_i^t\}$, where $t=|W_i|$ is the common cardinality of the classes $W_i$. Let us add to $G$ vertices $i=1,\ldots,t$, with the new vertex $i$ made adjacent to each vertex in $\{w_i^1,\ldots,w_i^t\}$. We add a new color $h\notin[h]\setminus\{0\}$ for the new vertices $1,\ldots,t$. This way, the resulting graph has a partition into $\Sigma$E-sets, but not an efficient coloring. In fact, the new edges adjacent to the new vertex $i\in\{1,\ldots,t\}$ should have pairwise different colors for the resulting coloring to be total, which is not the case since all colors in $\{2,\ldots,h-1,h\}$ were employed already, a contradiction.
\end{proof}

\begin{example}\label{sigue}
 To illustrate Theorem~\ref{illus} , the graph $ST_2^2$ depicted on the lower left of Figure~\ref{fig1} for Example~\ref{abrazo} has two additional central vertices 1 and 2 and six new thin edges from $W_1=\{0011,0101,0011\}$ onto vertex 1 and from $W_2=\{1100,1010,1001,1100\}$ onto vertex 2, leading to complete the 6-cycle $ST_2^2$ into an image of the 3-cube and exemplifying the observed contradiction.
\end{example}

\begin{remark}\label{occur} The total coloring of $ST^2_k$ will be referred to as its {\it color structure}. 
The $k2^{k-1}$ copies of $ST^2_{k-1}$ in $ST^2_k$ whose disjoint union is $ST^2_k\setminus\Sigma_i^k\setminus E_i^k$ inherit each a color structure that generalizes that of the 3-colored 6-cycles in $ST^2_3\setminus\Sigma_5^3$ and is similar to the color structure of $ST^2_{k-1}$.
\end{remark}

\begin{theorem}\label{app} The graphs $ST^2_k$ satisfy the conditions of Theorem~\ref{chi}, so they also satisfy its conclusions. 
\end{theorem}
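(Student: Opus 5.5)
The plan is to check, one at a time, the hypotheses of Theorem~\ref{chi} for $G=ST^2_k$ with $h=2k$. Once they hold, the conclusions of Theorem~\ref{chi} follow by citation. The hypotheses are:
\begin{enumerate}
\item $h$ is even with $h>4$;
\item $G$ is connected;
\item $G$ is $(h-2)$-regular;
\item $G$ has an efficient coloring using the color set $[h]\setminus\{0\}=\{1,\ldots,2k-1\}$.
\end{enumerate}

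\textbf{Parity, regularity and connectivity.} Take $h=2k$; this is even, and $h>4$ exactly when $k>2$. The case $k=2$ is the $6$-cycle of Example~\ref{abrazo}, which is excluded or treated separately.

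Regularity was already noted at the start of Section~\ref{s3}. A vertex $v$ has $2k-1$ positions $j\ne0$. Exactly one of them carries the symbol $v_0$, so $v$ has $2k-2=h-2$ neighbors.

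For connectivity, I would use the fact that star transpositions generate every transposition: $(i\;j)=(0\;i)(0\;j)(0\;i)$. The catch is that the graph only allows swaps with $v_j\ne v_0$. So I would argue directly that any arrangement can be sorted to $0011\cdots(k-1)(k-1)$. Repeatedly bring the symbol needed at position $p$ to the front and then swap it into position $p$. When the front symbol already equals the target, first swap the front with some position holding a different symbol. These steps are always available because $k>1$.

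\textbf{The efficient coloring.} This is the main step. The vertex classes are taken as $W_i=\Sigma_i^k$, $i=1,\ldots,2k-1$. By Theorem~\ref{ultimo} these form a partition of $V(G)$ into E-sets.

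For edges, the natural choice is the coloring of Theorem~\ref{pec}: the edge given by $(0\;j)$ gets color $j$. This is proper, since distinct positions give distinct colors at each vertex.

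Compatibility of the two colorings is checked at a vertex $v\in\Sigma_i^k$. Position $i$ is the unique one with $v_i=v_0$, so no edge at $v$ uses color $i$. The $2k-2$ incident edges therefore use exactly the colors $\{1,\ldots,2k-1\}\setminus\{i\}$. Together with the vertex color $i$, this gives all $2k-1$ colors, which is the efficiency requirement at $v$.

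It remains to show adjacent vertices get different colors. Let $v\in\Sigma_i^k$ and $w=v\cdot(0\;j)$ with $v_j\ne v_0$. In $w$, the partner position of $w_0=v_j$ is the other occurrence of $v_j$ in $v$, which is some position $m\ne i$. The one exception is when that other occurrence is at position $0$, which cannot happen since $v_0\ne v_j$. The check is whether $w\in\Sigma_i^k$ could still occur. I expect this to be the main obstacle.

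The argument I would try first: if $w\in\Sigma_i^k$, then $w_i=w_0=v_j$. But $w_i=v_i=v_0$ when $i\ne j$, and $j\ne i$ because $v_j\ne v_0=v_i$. This forces $v_0=v_j$, a contradiction. So adjacent vertices receive distinct colors, and more generally the vertex color $i$ never appears on an edge at $v$. This is consistent with Theorem~\ref{t2}.

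\textbf{Conclusion.} Each vertex and its incident edges carry all of $[h]\setminus\{0\}$, and the coloring is proper, so it is an efficient coloring. Theorem~\ref{chi} then applies, giving $G\setminus W_i$, the decomposition of $G\setminus W_i\setminus E_i$ (matching Remark~\ref{occur}), and the biregularity of $G\setminus E_i$.
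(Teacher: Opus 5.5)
Your check of the hypotheses is correct for $k\ge 3$. It is also more self-contained than the paper's, which takes the efficient coloring from Theorem~\ref{teofig2}. Your argument that ``vertex $v$ gets the $i$ with $v_i=v_0$, edge $(0\;j)$ gets $j$'' is a proper total coloring also reproves Theorem~\ref{t2}.

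There are two corrections. First, ``$v$ and its incident edges carry all $2k-1$ colors'' holds for every total coloring of a $\Delta$-regular graph with $\Delta+1$ colors, so it is not the efficiency condition. In the paper, efficiency means every closed neighborhood $N[v]$ receives all colors, i.e.\ the classes $\Sigma_i^k$ are E-sets partitioning $V$. That is exactly what Theorem~\ref{ultimo} supplies. Second, $k=2$ cannot be ``treated separately.'' In the $6$-cycle $ST^2_2$, deleting the antipodal pair $\Sigma_1^2=\{0011,1100\}$ disconnects it, and every subgraph is bipartite. So the conclusions genuinely fail there, and the statement needs $k\ge 3$.

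The real gap is your last step. The paper's proof does not obtain the conclusions by citation. It checks them on $ST^2_k$ itself: item 1 via Theorems~\ref{ultimo} and~\ref{t2}, item 2 via the degree count. This matters because the proof of Theorem~\ref{chi} never argues that $G\setminus W_i$ is connected or that $G\setminus E_i$ is non-bipartite, and the latter does not follow from the hypotheses. Take the $10\times 10$ toroidal grid, color vertex $(a,b)$ with $(a+2b)\bmod 5$, and color each edge $uw$ with $3(c(u)+c(w))\bmod 5$. This is a connected $4$-regular graph with an efficient total coloring in $5$ colors ($h=6$), yet it is bipartite.

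So you must verify the conclusions directly. Take $i=2k-1$; other $i$ follow by permuting positions $1,\dots,2k-1$.
\begin{itemize}
\item Item 1. In $ST^2_k\setminus\Sigma_i^k\setminus E_i^k$, no edge moves the symbol $c=v_{2k-1}$ or its other copy at some position $p\in\{1,\dots,2k-2\}$. Hence the components are copies of $ST^2_{k-1}$ on the positions $\{0,\dots,2k-2\}\setminus\{p\}$, one for each pair $(c,p)$, carrying the colors $\{1,\dots,2k-2\}\setminus\{p\}$. There are $2k(k-1)$ copies, which agrees with Remark~\ref{occur} only for $k=3$.
\item Connectivity. A color-$i$ edge joins the copy $(c,p)$ to $(d,p')$ exactly when $d\ne c$ and $p'\ne p$. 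This quotient is the tensor product of $K_k$ and $K_{2k-2}$, which is connected for $k\ge 3$, so $ST^2_k\setminus\Sigma_i^k$ is connected.
\item Item 2. The prefixes $0012\to1002\to0102\to2100\to0120\to1020\to2010\to0012$, with the remaining entries fixed, form a $7$-cycle using only star transpositions at three positions. Place these entries at position $0$ and three positions different from $i$. The cycle then avoids color $i$, so $ST^2_k\setminus E_i^k$ is not bipartite.
\end{itemize}
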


\begin{proof} Let us see that the hypotheses of Theorem~\ref{chi} are satisfied by taking $h=2k$, $G=ST^2_k$, $W_i=\Sigma_i^k$ and $E_i=E_i^k$, where $i\in[h]\setminus\{0\}$.
Then, it is seen that $ST^2_k\setminus W_i$ is a connected $(h-3)$-regular subgraph such that at most $h-1$ colors are required for it to have an efficient coloring.
Moreover, Theorems~\ref{ultimo} and~\ref{t2} imply item 1. Furthermore, item 2 holds, since clearly
$ST^2_k\setminus E_i$ is a non-bipartite $(h-2,h-3)$-biregular graph, as the vertices of $W_i$ have degree $h-2$ in $ST^2_k$ while the remaining vertices, having each lost the adjacency of an edge of $E_i$, have degree $h-3$.
\end{proof}

\section{SE- and $\Sigma$E-chains of 2-set star transposition graphs}\label{Cs}
 
In \cite{D73}, a countable family of graphs ${\mathcal G}=\{\Gamma_1\subset\Gamma_2\subset\cdots\subset\Gamma_i\subset\Gamma_{i+1}\subset\cdots\}$ was said to be an {\it E-chain} if every $\Gamma_i$ was an induced subgraph of $\Gamma_{i+1}$ and each $\Gamma_i$  had an E-set $C_i$, for $0<i\in\mathbb{Z}$.
Let $\kappa_i$ be an {\it inclusive} map of $\Gamma_i$ into $\Gamma_{i+1}$, meaning that $\kappa_i(\Gamma_i)$ is an induced subgraph of $\Gamma_{i+1}$, for $i\ge 1$. 
If $C_{i+1}$ was the open neighborhood \cite{D73} $N(\kappa_i(V(\Gamma_i)))$ of $\kappa_i(V(\Gamma_i))$, then the E-chain $\mathcal{G}$ was said to be a {\it neighborly} E-chain.
A particular case of E-chain $\mathcal{G}$ has each $C_{i+1}$ split as a partition of images $\zeta_i^{(j)} (C_i)$ of $C_i$ through respective inclusive maps $\zeta_i^{(j)}$, where $j$ varies on some  finite indexing set. In such a case, the E-chain was said to be {\it segmental} \cite{D73}. 
An E-chain $\mathcal{G}$ was said to be {\it dense} \cite{D73}, if for each $n\ge 1$ it is $|V(\Gamma_n)|=(n+1)!$ and $|C_n|=n!$ 

In \cite{D73}, a dense segmental neighborly E-chain
$${\mathcal{ST}}(1)=\{ST^1_1\subset ST^1_2\subset\cdots\subset ST^1_k\subset ST^1_{k+1}\subset\cdots\}$$
was constructed, but such an E-chain is not suitable in the context of the graphs $ST^2_k$ and say their $\Sigma$E-sets $\Sigma_i^k$. Instead, 
the graphs $ST^2_k$ form a $\Sigma$E-chain
\begin{eqnarray}\label{G2}{\mathcal{ST}}(2)=\{ST^2_1\subset ST^2_2\subset\cdots\subset ST^2_k\subset ST^2_{k+1}\subset\cdots\},\end{eqnarray}
with the inclusions $ST^2_k\subset ST^2_{k+1}$ realized by a set of $k+1$ maps
\begin{eqnarray}\label{z1}\kappa_k^j:ST^2_k\rightarrow ST^2_{k+1},\;\mbox{ where }j\in[k+1],\end{eqnarray}  that are {\it neighboring}, meaning that: {\bf(a)} the images $\kappa_k^j(ST^2_i)$ are pairwise disjoint, and {\bf(b)}
\begin{eqnarray}\label{z2}\Sigma_k^{k+1}=\cup_{j=1}^{k-1}N(\kappa_i^j(V^2_i))\end{eqnarray} is a disjoint union such that \begin{eqnarray}\label{z3}\kappa_k^j(v_0v_1\cdots v_{2k-2}v_{2k-1})=v^j_0v^j_1\cdots v^j_{2k-2}v^j_{2k-1}jj,\;\mbox{ for }j\in[k+1],\end{eqnarray}
where 
\begin{eqnarray}\label{z4}v^k_i=v_i,\; v^{k+1}_i=v_i+1\!\mod(k+1),\; \ldots, v^{k+h}_i=a_i+h\!\mod(k+1), \ldots,\end{eqnarray} for $i\in[2k]$, with the superindices $k+h$ of the entries $a^{k+h}_j$ taken mod $(k+1)$, too.

A $\Sigma$E-chain as in display (\ref{G2}) where each inclusion $ST^2_k\subset ST^2_{k+1}$ is realized by $k+1$ neighboring maps $\kappa_k^j$, as in displays (\ref{z1}) to (\ref{z4}), is said to be a
{\it disjoint neighboring} $\Sigma$E-chain. 

\begin{example}
In Figure~\ref{fig4}, the three 6-cycles $d_1v_i=24, 04, 14$ (notation specified in the proof of Theorem~\ref{cutout}) are the images of the graph $ST_2^2$, consisting of the 6-cycle $$(0011,1001,0101,1100,0110,1010),$$ by
adding the suffix $jj$ to the modified 6 cycle obtained by adding 0, 1, 2 mod 3 respectively to the entries of such 6-cycle.   
\end{example}

The notion of segmental E-chain can also be generalized to the case of the graphs $ST^2_k$, where we replace ``neighborly" by ``disjoint neighboring". In such case, the $\Sigma$E-chain will be said to be {\it disjoint fragmental}. It is clear by symmetry that the $\Sigma$E-chain ${\mathcal{ST}}(2)$ of display (\ref{G2}) is disjoint fragmental. 

\begin{remark}\label{Sch}
The density of ${\mathcal ST}(1)$ in \cite{D73} does not help for the $\Sigma$E-chain ${\mathcal ST}(2)$. 
Even though for $k>1$ the graph $ST^1_k$ is the Cayley graph of $Sym_k$ generated by the transpositions $(0\;i)$, ($0<i<k$), we have that for $\ell>1$ the graph
$ST^\ell_k$ is neither a Cayley graph nor a Schreier coset graph of the quotient of $Sym_{k\ell}$ modulo say its subgroup $H_\ell$ generated by the transpositions $(i\;i+1)$, ($0\le i<k$), because the edges of $ST^\ell_k$ are not given by transpositions $(0\;i)$ independently of the values $i$ in different vertices of $ST^\ell_k$. We recur to the following. 
\end{remark}

Given a group $G$, a subgroup $H$ of $G$ and a generating set $S(Hg)$ for each right coset $Hg$ of $H$ in $G$, a {\it Schreier local coset graph} of $G$ is a graph whose vertices are the right cosets $Hg$ and whose edges are of the form $(Hg,Hgs)$, for $g\in G$ and $s\in S(Hg)$.  
 The density of \cite{D73} must be replaced by this in order to be useful in the context of ${\mathcal ST}(2)$. The $\Sigma$E-sets found in the graphs $ST^2_k$ become
 as dense as they can be via such modification, so we say that such $\Sigma$E-sets are {\it thick}. We have the following.

\begin{theorem} 
The $\Sigma$E-chain ${\mathcal{ST}}(2)$  of display (\ref{G2}) is  a thick, disjoint fragmental, disjoint neighboring $\Sigma$E-chain with the $\Sigma$E-sets $\Sigma_i^k$ of Theorem~\ref{t2}.
\end{theorem}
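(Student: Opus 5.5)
The statement asks for three properties of $\mathcal{ST}(2)$: it is disjoint neighboring, it is disjoint fragmental, and its $\Sigma$E-sets are thick. I would verify them in that order, since the second depends on the first. The properties of $\Sigma_i^k$ themselves are taken from Theorem~\ref{ultimo} (partition of $V_k^2$ into $2k-1$ $\Sigma$E-sets) and Theorem~\ref{t2}.

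\textbf{Disjoint neighboring.} Work with the maps $\kappa_k^j$ of display (\ref{z3}). Each image $\kappa_k^j(ST^2_k)$ consists of the vertices of $ST^2_{k+1}$ ending in the suffix $jj$. The body is a $2$-set permutation over $[k+1]\setminus\{j\}$, relabelled by the cyclic shift of display (\ref{z4}).

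\begin{enumerate}
\item \emph{Injectivity.} The shift is a bijection of symbols, so each $\kappa_k^j$ is injective.
\item \emph{Edges and induced subgraphs.} A star transposition $(0\;i)$ with $i<2k$ commutes with appending $jj$. So $\kappa_k^j$ maps edges to edges. Conversely, any edge of $ST^2_{k+1}$ between two vertices ending in $jj$ uses a transposition $(0\;i)$ with $i<2k$. Hence each image is an induced copy of $ST^2_k$.
\item \emph{Condition (a).} Images with distinct suffixes $jj$ are vertex-disjoint.
\item \emph{Condition (b).} I would show that the open neighbourhoods $N(\kappa_k^j(V_k^2))$ are pairwise disjoint and that their union is the set of vertices with $v_0=v_{2k+1}$, i.e.\ $\Sigma^{k+1}_{2k+1}$.
 \begin{itemize}
 \item A neighbour outside the image is obtained by swapping $v_0$ with one of the last two entries $j$. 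This yields a vertex $j\,v_1\cdots v_{2k-1}\,v_0 j$ or $j\,v_1\cdots v_{2k-1}\,j v_0$.
 \item Conversely, take $u$ with $u_0=u_{2k+1}=j$. Then $u$ has a unique neighbour with suffix $jj$, namely the one obtained by swapping $u_0$ with $u_{2k}$. This gives both uniqueness and disjointness.
 \end{itemize}
\end{enumerate}

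This matches the role of $d_1v_i$ in Corollary~\ref{cutout} and Figure~\ref{fig4}. Display (\ref{z2}) indexes the subscript as $\Sigma_k^{k+1}$, but it should be read as the last-position class $\Sigma_{2k+1}^{k+1}$. Its union also runs over all $j\in[k+1]$, so I would fix that indexing first.

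\textbf{Disjoint fragmental.} Here I would check that $\Sigma^{k+1}_{2k+1}$ splits into images of the classes $\Sigma^k_i$ under inclusive maps.

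\begin{enumerate}
\item Compose $\kappa_k^j$ with the swap of positions $0$ and $2k$, or of positions $0$ and $2k+1$. This sends $\Sigma_i^k$ into $\Sigma^{k+1}_{2k+1}$.
\item The symmetry of the construction under the cyclic relabelling $j\mapsto j+1 \bmod (k+1)$ makes all pieces isomorphic.
\item Together with the uniqueness statement in (b), this gives the partition.
\end{enumerate}

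\textbf{Thickness.} I expect this to be the main obstacle, because ``thick'' is only described informally, as ``as dense as possible'' relative to Schreier local coset graphs. I would make it precise as follows.

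\begin{enumerate}
\item Identify $V_k^2$ with the cosets $H_2g$ of $Sym_{2k}$. Here $H_2$ is the subgroup generated by the transpositions swapping the two occurrences of each symbol, of order $2^k$.
\item At each coset take the local generating set $S(H_2g)=\{(0\;i): v_i\ne v_0\}$. This realizes $ST^2_k$ as a Schreier local coset graph, which is exactly why Remark~\ref{Sch} excludes a plain Schreier graph.
\item Count the domination. An E-set in a $(2k-2)$-regular graph on $N=(2k)!/2^k$ vertices has exactly $N/(2k-1)$ vertices.
\item By Theorem~\ref{ultimo}, each $\Sigma_i^k$ attains this bound, since $|\Sigma_i^k|=N/(2k-1)$. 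So no denser E-set exists, and this is the sense in which they are thick.
\end{enumerate}

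If the intended notion is instead a $\Sigma$E-analogue of density, with $|\Gamma_n|$ and $|C_n|$ computed along the chain, I would verify the corresponding ratio $|C_n|/|V(\Gamma_n)|=1/(2n-1)$ directly from these counts.
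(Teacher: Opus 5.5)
\textbf{Gap in condition (b).} Your first sub-bullet correctly lists both outside neighbours of a vertex $v_0\cdots v_{2k-1}\,jj$ of the image. However, the one produced by $(0\;2k+1)$, namely $j\,v_1\cdots v_{2k-1}\,j\,v_0$, has $u_0=u_{2k}$. So it lies in $\Sigma^{k+1}_{2k}$, not in $\Sigma^{k+1}_{2k+1}$. Your second sub-bullet only proves $\Sigma^{k+1}_{2k+1}\subseteq\bigcup_j N(\kappa_k^j(V_k^2))$ together with uniqueness; the reverse inclusion is false.

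In fact $\bigsqcup_j N(\kappa_k^j(V_k^2))=\Sigma^{k+1}_{2k}\sqcup\Sigma^{k+1}_{2k+1}$. This set has $2(k+1)|V_k^2|$ vertices, twice $|\Sigma^{k+1}_{2k+1}|=|V_{k+1}^2|/(2k+1)=(k+1)|V_k^2|$. It is not even independent. For $ST^2_1\subset ST^2_2$ the images are $\{0011\}$ and $\{1100\}$, their open neighbourhoods give $\{1001,1010,0110,0101\}$, and $0101\sim 1001$. Every $N(\kappa_k^j(V_k^2))$ meets $\Sigma^{k+1}_{2k}$ and $\Sigma^{k+1}_{2k+1}$ in $|V_k^2|$ vertices each, so no reindexing of (\ref{z2}) makes the literal open-neighbourhood statement true.

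What your uniqueness argument does establish is the version that follows only the colour-$2k$ edges of Theorem~\ref{pec}, i.e.\ the transposition $(0\;2k)$: $\Sigma^{k+1}_{2k+1}=\bigsqcup_{j\in[k+1]}\{\kappa_k^j(v)\,(0\;2k): v\in V_k^2\}$. This is also what Corollary~\ref{cutout} depicts: the ``left colour'' edges leaving each copy end in the black class. Condition (b) must be stated and proved in that form, or symmetrically with $(0\;2k+1)$ and $\Sigma^{k+1}_{2k}$.

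\textbf{Fragmental step.} The same slip recurs here: composing with the swap of positions $0$ and $2k+1$ lands in $\Sigma^{k+1}_{2k}$, so only $(0\;2k)$ is admissible. Even then, the sets $\{\kappa_k^j(v)\,(0\;2k): v\in\Sigma_i^k\}$ do partition $\Sigma^{k+1}_{2k+1}$ into $(k+1)(2k-1)$ pieces. But the maps $v\mapsto\kappa_k^j(v)\,(0\;2k)$ are not inclusive: they send all of $V_k^2$ into the independent set $\Sigma^{k+1}_{2k+1}$. So this is not segmentality in the sense of \cite{D73}.

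Inclusive maps that do work are given by the induced copies $Q_{q,x}=\{u\in V_{k+1}^2: u_q=u_{2k}=x\}$ of $ST^2_k$, for $1\le q\le 2k-1$ and $x\in[k+1]$, with the remaining positions relabelled monotonically. The copy of $\Sigma^k_{2k-1}$ in $Q_{q,x}$ is $Q_{q,x}\cap\Sigma^{k+1}_{2k+1}$. These sets partition $\Sigma^{k+1}_{2k+1}$, because when $u_0=u_{2k+1}$ the symbol $x=u_{2k}$ has a unique second position $q\in\{1,\ldots,2k-1\}$.

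\textbf{Thickness.} Your steps 1--2 coincide with the paper's argument: display (\ref{esq}), extended via the subgroup generated by the $(2a\;2a+1)$ with local generators $(0\;i)$, $v_i\ne v_0$. That identification is essentially the whole of the paper's proof, which otherwise defers to the preceding discussion. Your steps 3--4 add nothing. Every E-set of a $(2k-2)$-regular graph on $N$ vertices has exactly $N/(2k-1)$ vertices, so ``attaining the bound'' does not distinguish the $\Sigma_i^k$.
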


\begin{proof} 
Displays like (\ref{esq}), but for $k>2$, extend the definition of a Schreier coset graph as  said above. The example in display~\ref{esq} shows that $ST^2_2$ is a Schreier local coset graph of the group $V(ST^1_4)$, its subgroup $H$ generated by the transpositions $(0\;1)$ and $(2\;3)$, and the local generators indicated in the last line of the display. This shows downward from the top the right cosets of $V(ST^1_4)$ mod the subgroup generated by the transpositions $(0\;1),(2\;3)$, and then the representations of such right cosets as vertices of $ST^2_2$.  
\begin{eqnarray}\label{esq}
\begin{array}{l||c|c|c|c|c|c}
Right                &0123 & 2301 &0213 &2031 &0231&2013\\
cosets\; of        &0132 &2310  &0312 &2130 &0321&2103\\
V(ST^1_4)         &1023 &3201  &1203 &3021 &1230&3012\\
mod\;H             &1032  &3210 &1302 &3120 &1320&3102 \\\hline
V(ST^2_2)         &0011 &1100  &0101 &1010 &0110&1001\\\hline
Gen.\; set&(0\;2),(0\;3)&(0\;2),(0\;3)&(0\;1),(0\;3)&(0\;1),(0\;3)&(0\;1),(0\;2)&(0\;1),(0\;2)\\
\end{array}
\end{eqnarray}
In a similar way, it is seen for $k>2$ that $ST^2_k$ is a Schreier local coset graph of $V(ST^2_k)$ with respect to the subgroup generated by the transpositions $(2a\;2a+1)$ with $0\le a<k$. The discussion and remaining modifications above in this section provide the rest of the properties in the statement.
\end{proof}

\section{From 2-set star to pancake permutation graphs}\label{panqueque}

Let $0<k,\ell\in\mathbb{Z}$. The {\it $\ell$-set pancake permutation graph} $PC_k^\ell$ is defined as follows. First, let $V(PC_k^\ell)=V(ST_k^\ell)=V^\ell_k$ be the set of all $\ell$-set permutations of length $k\ell$ and let $\pi_1=\pi_2$ be the  identity permutation of $V_k^\ell$. For $i>2$, let  $\pi_i$ be an any product of independent transpositions of the set $\{1,\ldots,i-1\}$. Let $A(\pi_1,\ldots,\pi_i,\ldots,\pi_{2k-1})=\{(0\;1)\pi_1,\ldots,(0\;i)\pi_i,\ldots,(0\;2k-1)\pi_{2k-1}\}.$ In \cite[Lemma 2]{D73} it is implied that for any choice of the permutations $\pi_i$, for $i\ge 3$, the set $A(\pi_1,\ldots,\pi_{2k-1})$ generates $Sym_{2k-1}$, so the sequence of Cayley graphs with generating set $A(\pi_1,\ldots,\pi_{2k-1})$ forms a chain of nested graphs ${\mathcal G}=\{\Gamma_1\subset\Gamma_2\subset\cdots\subset\Gamma_k\subset\Gamma_{k+1}\subset\cdots\}$
 with natural inclusions $\Gamma_k\subset\Gamma_{k+1}$.

If we choose $\pi_1=\pi_2=\cdots=\pi_{2k-1}$ in $A(\pi_1,\ldots,\pi_{2k-1})$, we get the $\ell$-set star transposition graphs $ST^\ell_k$ and a resulting SE$^\ell$-chain ${\mathcal G}={\mathcal ST}^\ell$ with $\Gamma_k=ST^\ell_k$. If we choose
$\pi_i=(1\;i-1)\cdots(\lfloor i/2\rfloor\;\lceil i/2\rceil)$, for $i=3,\ldots,k-1$, we get the {\it $\ell$-set pancake permutation graph} $PC_k^\ell$ and a resulting SE$^\ell$-chain ${\mathcal G}={\mathcal PC}^\ell$. 
In particular, the $\ell$-set pancake permutation graph $PC^\ell_k$ has the same vertex set as $ST^\ell_k$ and its edges involve each the maximal product of concentric disjoint transpositions in any prefix of an endvertex string, including the external transposition (representing an edge of $ST^\ell_k$).
The graphs $PC_k^1$ were seen in \cite{D73} to form a dense segmental neighborly E-chain 
$\mathcal{PC}(1)=\{PC_1^1,PC_2^1,\ldots,PC_k^1,\ldots\}$. (Figure 2 of \cite{D73} represents the graph $PC^1_4$). In a similar fashion to that of Section~\ref{Cs}, the following partial extension of that result can be established.
 
\begin{theorem} 
The chain $\mathcal{PC}(2)=\{PC_1^2,PC_2^2,\ldots,PC_k^2,\ldots\}$ is  a thick, disjoint neighborly $\Sigma$E-chain with the $\Sigma$E-sets $\Sigma_i^k$ of Theorem~\ref{t2}, but it fails to be disjoint segmental. A similar result is obtained for any choice of the involutions $\pi_1,\pi_2,\ldots,\pi_i\ldots$ with not all the $\pi_i$s being identity permutations.
\end{theorem}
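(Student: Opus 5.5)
The plan mirrors the treatment of ${\mathcal{ST}}(2)$ in Section~\ref{Cs}, transferring each property from $ST^2_k$ to $PC^2_k$, and then exhibits an explicit obstruction to segmentality.

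\textbf{Step 1: $\Sigma_i^k$ is still a $\Sigma$E-set of $PC^2_k$.} Since $V(PC^2_k)=V(ST^2_k)$, fix $i$ and a vertex $v\notin\Sigma_i^k$. Let $j$ be the position with $v_j=v_i$, $j\ne i$. In $ST^2_k$ the unique neighbor of $v$ in $\Sigma_i^k$ is obtained from $v$ by $(0\;j)$, as in the proof of Theorem~\ref{ultimo}. In $PC^2_k$ the generator attached to position $j$ is $(0\;j)\pi_j$, where $\pi_j$ is a product of concentric transpositions inside $\{1,\ldots,j-1\}$.
\begin{itemize}
\item I will relabel positions so that the entry landing at position $0$ equals the entry landing at position $i'=\pi_j^{-1}(i)$, or its analogue.
\item For this I will use that each $\pi_j$ is an involution fixing $0$ and $j$.
\item The map $v\mapsto$ (its unique $\Sigma$-neighbor) then becomes a bijection twisted by the involutions.
\item Uniqueness follows because the remaining generators move the first entry to a position holding a symbol different from $v_i$.
\end{itemize}
Combining this with the partition argument of Theorem~\ref{ultimo} gives the $\Sigma$E-partition. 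Possibly the index $i$ must be relabelled by $\pi$, which I will allow.

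\textbf{Step 2: thickness and the disjoint neighboring property.} For thickness I will reuse the Schreier local coset description (\ref{esq}). The local generating sets change from $\{(0\;j)\}$ to $\{(0\;j)\pi_j\}$ on the same right cosets modulo the subgroup generated by the $(2a\;2a+1)$, so density is unaffected.

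For the neighboring maps I will take $\kappa_k^j$ exactly as in (\ref{z3})--(\ref{z4}): shift the symbols mod $k+1$ and append the suffix $jj$. The key check is that pancake generators of $PC^2_k$ are restrictions of pancake generators of $PC^2_{k+1}$ to prefixes of length $2k$. This holds because $\pi_i$ only involves positions below $i$. It makes each $\kappa_k^j$ an inclusive map whose images are pairwise disjoint, since the suffixes differ. Their neighborhoods cover $\Sigma_k^{k+1}$ exactly as in (\ref{z2}), because a vertex ending in $jj$ has its first entry adjacent to the suffix only via the outermost generator.

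\textbf{Step 3: failure of disjoint segmentality.} This is the main obstacle. Segmentality requires $\Sigma^{k+1}$ to split into images of $\Sigma^{k}$ under inclusive maps. I will pick the smallest index $i\ge 3$ with $\pi_i\ne$ identity. I will then exhibit in $PC^2_{k+1}$ a vertex of a $\Sigma$E-set whose $\Sigma$E-neighborhood structure, for instance its local $6$-cycle types $C(c_1,c_2,c_3)$ versus $C(d_1,d_2)$ from the proof of Theorem~\ref{teofig2}, is not realized inside any copy $\kappa(PC^2_k)$.

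Concretely, I will compute in $PC^2_3$ a short cycle through the generator $(0\;3)(1\;2)$ that mixes suffix classes, so that no inclusive copy of $PC^2_2$ contains it. The same computation with the first nontrivial $\pi_i$ yields the final sentence of the statement for arbitrary involutions. I expect the hardest part to be making this obstruction rigorous rather than pictorial. If direct cycle counting fails, I would fall back on comparing the number of edges of $PC^2_{k+1}$ induced on $\Sigma^{k+1}$ with the count forced by a segmental decomposition.
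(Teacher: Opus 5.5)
Your Step~1 is false, and this is exactly where the paper's proof locates the obstruction. In $PC^2_k$ the generator $(0\;j)\pi_j$ with $j>i$ also moves position $i$: the new entry there is $v_{j-i}$. So a vertex $v\notin\Sigma_i^k$ gets a neighbor in $\Sigma_i^k$ from the twin of $v_i$ only if that twin sits at a position $j<i$. It gets one further neighbor for every $j>i$ with $v_j=v_{j-i}\neq v_0$.

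Concretely, in $PC^2_3$ the vertex $011220\notin\Sigma_1^3$ has two neighbors in $\Sigma_1^3$: $110220$ (via $(0\;2)$) and $221100$ (via $(0\;4)(1\;3)$). By contrast, $012012\notin\Sigma_1^3$ has none. Likewise $012120$ has two neighbors in $\Sigma_2^3$, $011220$ has none in $\Sigma_3^3$, and $012021$ has two in $\Sigma_4^3$.

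No relabelling ``by $\pi$'' rescues this. The involution that acts depends on $j$, hence on $v$, not on a fixed index. The paper's proof says precisely that the pancake generators prevent $\Sigma_i^k$, $1\le i<2k-1$, from being $\Sigma$E-sets, and that only the ``black'' set $\Sigma_{2k-1}^k=\{v: v_0=v_{2k-1}\}$ survives.

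You can prove the latter cleanly. Every generator with $j<2k-1$ fixes position $2k-1$, so the unique $\Sigma_{2k-1}^k$-neighbor of $v$ comes from the twin of $v_{2k-1}$, exactly as in $ST^2_k$. The flip with $j=2k-1$ sends $v_0$ to the last position and therefore never lands in $\Sigma_{2k-1}^k$.

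Because of this, your Step~3 aims at the wrong target. In the paper, the failure of disjoint segmentality follows from the above. Only one of the $2k-1$ sets $\Sigma_i^k$ remains a $\Sigma$E-set, so there is no $\Sigma$E-partition of $V^2_k$ to split into images of the previous level. That partition is what made $\mathcal{ST}(2)$ fragmental ``by symmetry''.

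Your proposed obstruction has three problems. The comparison of $6$-cycle types, or a ``short cycle through $(0\;3)(1\;2)$ that mixes suffix classes'', is not defined precisely enough to check. The edge-count fallback is unspecified. Worse, it conflicts with your Step~1: if all $\Sigma_i^k$ really stayed $\Sigma$E-sets, the $\mathcal{ST}(2)$ symmetry argument would point toward segmentality, not away from it.

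Relatedly, in Step~2 two generators bring a suffix entry of a $jj$-suffixed vertex of $PC^2_{k+1}$ to the front: the flips of lengths $2k+1$ and $2k+2$. It is the length-$(2k+1)$ flip that lands in the black set $\Sigma_{2k+1}^{k+1}$; the outermost one lands in $\Sigma_1^{k+1}$.
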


\begin{proof} Adapting the arguments given for ${\mathcal ST}(2)$ in Subsection~\ref{Cs} can only be done for the $\Sigma$E-sets $\Sigma_i^k$ in 2-set pancake permutation graphs, since the feasibility for the sets $\Sigma_i^k$, ($1\le i<2k-1$),  to be $\Sigma$E-sets is obstructed by the pancake permutations in $A(\pi_1,\ldots,\pi_{2k-1})$, meaning that we can only establish that the $\Sigma$E-chain $\mathcal{PC}(2)$ is dense and disjoint neighborly, but not disjoint segmental. The ``black'" vertices, those whose color is $2k-1$, form an $\Sigma$E-set $\Sigma^k_i$ with the desired properties, and their removal leaves a $2k-2$-regular graph from which the removal of the ``black" edges, forming an edge subset $E^2_i$, leaves the disjoint union of the open neighborhoods $N(v)$ of the vertices $v$ in the $\Sigma$E-set $\Sigma^k_i$. This behavior is similar for any other choice of the involutions $\pi_1,\pi_2,\ldots,\pi_i\ldots$ with not all the $\pi_i$s being identity permutations, other than $\pi_i=(1\;(i-1))\cdots(\lfloor i/2\rfloor\;\lceil i/2\rceil)$, for $i=3,\ldots,k-1$, which were used precisely to define the pancake permutation graphs.
\end{proof}


\begin{thebibliography}{99}

\bibitem{AK} S. Arumugam and R. Kala, {\it Domination parameters of star graph}, Ars Combinatoria,  {\bf 44} (1996), 93--96.

\bibitem{BBS} D. W. Bange, A. E. Barkausas and P. J. Slater, {\it Efficient dominating sets in graphs}, in: R. D. Ringeisen and F. S. Roberts, eds., Applications of Discrete Math. (SIAM, Philadelphia, 1988) 189--199.

\bibitem{gen} D. W. Bange, A. E. Barkausas, L. H.  Host and P. J. Slater, {\it Generalized domination and
efficient domination in graphs}, Discrete Math., {\bf 159} (1996), 1--11

\bibitem{B1} M. Behzad, {\it Graphs and their chromatic numbers}, PhD thesis, Michigan State University, 1965.

\bibitem{B2} M. Behzad, {\it The total chromatic number}, Proc. Conf. Combin. Math. and Appl. (1969), 1--8. 

\bibitem{Borges} J. Borges and J. Rif\'a, {\it A characterization of 1-perfect additive codes}, IEEE Transactions on Information Theory, {\bf 46} (1999), 1688--1697. 

\bibitem{Mazzu} S. Dantas, C. M. H. de Figueiredo, G. Mazzuocollo, M. Preissmann, V. F. dos Santos, D. Sasaki, {\it On the total coloring of generalized Petersen graphs}, Discrete Math., {\it 339} (2016), 1471--1475.

\bibitem{prev} I. J. Dejter, {\it Total coloring of regular graphs of girth = degree +1}, Ars Combinatoria, {\bf 162} (2025), 159--176.

\bibitem{cong} I. J. Dejter, {\it On efficient total colorings of regular graphs}, Congressus Numerantium, {\bf 236} (2025), 3--13.

\bibitem{D80} I. J. Dejter, {\it SQS-graphs of extended 1-perfect codes}, Congressus Numerantium {\bf 193} (2008), 175--194.


\bibitem{D73} I. J. Dejter and O. Serra, {\it Efficient dominating sets in Cayley graphs}, Discrete Appl. Math., {\bf 129} (2003), 319--328.

\bibitem{D76} I. J. Dejter and O. Tomaiconza, {\it Nonexistence of efficient dominating sets in the Cayley graphs generated by transposition trees of diameter 3}, Discrete Appl. Math., {\bf 232} (2017), 116--124.

\bibitem{Erdos}, P. Erd\"os, A. L. Rubin and H. Taylor, {\it Choosability in graphs}, Congressus Numerantium, {\bf 26} (1979), 125--157.

\bibitem{Feng} Y. Feng, W. Lin, {\it A concise proof for total coloring subcubic graphs}, Inform. Process. Lett., {\bf 113} (2013), 664--665.

\bibitem{tc-as} J. Geetha, N. Narayanan and K. Somasundaram, {Total coloring-a survey}, AKCE int. Jour. of Graphs and Combin., {\bf 20}, (2023), issue 3. 339-351. 

\bibitem{faltaba} P. Gregor, A. Merino and T. M\"utze, {\it Star transpositions Gray codes for multiset permutations}, J. of Graph Theory, {\bf 103(2)}, (2023), 212--270.

\bibitem{edig} T. W. Haynes, S. T. Hedetniemi and M. A. Henning, {\it Efficient domination in graphs}, in: Domination in graphs:  Core concepts, Springer Monographs in Mathematics, (2023), 259--289.

\bibitem{Huffman} W. G. Huffman and V. Pless, Fundamentals of Error Correcting Codes, Cambridge University Press, 2003.

\bibitem{toft} T. R. Jensen and B. Toft, Graph coloring problems, Wiley-Interscience, New York, 18--21.

\bibitem{Knor} M. Knor and P. Poto\v{c}nik, {\it Efficient domination in vertex-transitive graphs}, Eur. Jour. Combin., {\bf 33} (2012), 1755--1764.

\bibitem{MWS} F. J. MacWilliams and N. Sloane, The Theory of Error-Correcting Codes, North Holland Mathematical Library, Volume 16, 1977.

\bibitem{Vera} V. Pless, Introduction to the Theory of Error-Correcting Codes, John Wiley \& Sons, third edition, 1998.

\bibitem{Rosen} M. Rosenfeld, {\it On the total chromatic number of a graph}, Israel J. Math., {\bf 9} (1971), 396--402.

\bibitem{Arroyo} A. S\'anchez-Arroyo, {\it Determining the total coloring number is NP-hard}, Discrete Math., {\bf 78} (1979), 315--319.

\bibitem{Thomassen} C. Thomassen, {\it Every planar graph is 5-choosable}, J. Combin. Theory, ser. B, {\bf 62} (1994), 180--181.

\bibitem{Vi} N. Vijayaditya, {\it On total chromatic number of a graph}, J. London Math. Soc., {\bf 2} (1971), 405--408.

\bibitem{V} V. G. Vizing, {\it On an estimate of the chromatic class of a $p$-graph}, Discret Analiz, {\bf 3} (1969), 25--30.

\bibitem{V2}, V. G. Vizing, {\it Vertex colorings with given colors}, Metody Diskret Analiz., {\bf 29} (1976), 3--10.

\bibitem{Yap} H.-P. Yap, Total colourings of graphs, Lecture Notes in Mathematics, vol. 1623, Springer/Verlag, Berlin/Heidelberg, 1996.

\end{thebibliography}
\end{document}